\documentclass[10pt]{amsart}

\usepackage{amsfonts, amssymb, amsmath, enumerate}

\numberwithin{equation}{section}


\DeclareMathOperator{\E}{\mathbb{E}}

\def \C {\mathbb{C}}
\def \N {\mathbb{N}}

\def \R {\mathbb{R}}

\def \E {\mathbb{E}}

\def \vol {{\rm vol}}

\def \etc {,\ldots,}


\newtheorem{theorem}{Theorem}[section]
\newtheorem{proposition}[theorem]{Proposition}
\newtheorem{corollary}[theorem]{Corollary}
\newtheorem{lemma}[theorem]{Lemma}

\theoremstyle{remark}


\begin{document}

\title{On hyperplane sections and projections in $l_p^n$}

\author{Hermann K\"onig (Kiel)}

\keywords{Volume, hyperplane sections, projections, $l_p^n$-ball, random variables}
\subjclass[2000]{Primary: 52A38, 52A40 Secondary: 46 B07, 60F05}

\begin{abstract}
For $2 < p < p_0 \simeq 26.265$, the hyperplane section of the $l_p^n$-unit ball $B_p^n$ perpendicular to $a^{(n)} = \frac 1 {\sqrt n} (1 \etc 1)$ for large $n$ has larger volume than the one orthogonal to $a^{(2)} = \frac 1 {\sqrt 2} (1,1,0 \etc 0)$, as shown by Oleszkiewicz. This is different from the case of $l_\infty^n$ considered by Ball. We give a quantitative estimate for which dimensions $n$ this happens, namely for $n > c (\frac 1 {p_0-p} + \frac 1 {p-2})$ for some absolute constant $c>0$. Correspondingly for projections of $B_q^n$ onto hyperplanes, Barthe and Naor showed that projections onto hyperplanes perpendicular to $a^{(n)}$ have smaller volume for large $n$ than onto the one orthogonal to $a^{(2)}$, if $\frac 4 3 < q < 2$, different from the case $q=1$. We show that this happens for all $n > 5 (\frac 1 {q-\frac 4 3} + \frac 1 {2-q})$.
\end{abstract}

\maketitle

\section{Introduction and main results}

In a well-known paper Ball \cite {B} proved that the hyperplane section of the $n$-cube perpendicular to $a^{(2)} = \frac 1 {\sqrt 2} (1,1,0 \etc 0) \in S^{n-1} \subset \R^n$ has maximal volume among all hyperplane sections. Earlier Hadwiger \cite{Ha} and Hensley \cite{He} had shown independently of one another that coordinate hyperplanes, e.g. orthogonal to $a^{(1)} = (1,0 \etc 0) \in S^{n-1}$, yield the minimal $(n-1)$-dimensional cubic sections. \\

Meyer and Pajor \cite{MP} found extremal sections of the $l_p^n$ balls $B_p^n$: They proved that the normalized volume of sections of $B_p^n$ by a fixed hyperplane is monotone increasing in $p$. This implies that coordinate planes provide the minimal sections for $2 \le p < \infty$, as for $p=\infty$, and the maximal sections for $1 \le p \le 2$. The minimal hyperplane sections of $B_1^n$ are those orthogonal to a main diagonal, e.g. $a^{(n)} = \frac 1 {\sqrt n}(1 \etc 1) \in S^{n-1}$, see also \cite{MP}. Koldobsky \cite{K} extended this to the full range $1\le p \le 2$. \\

This left open the case of the maximal hyperplane section of $B_p^n$ for $2 < p < \infty$. The situation there is more complicated, since then the maximal hyperplane may depend as well on $p$ as on the dimension $n$: Oleszkiewicz \cite{O} proved that Ball's result does not transfer to the balls $B_p^n$ if $2 < p < p_0 \simeq 26.265$: the intersection of the hyperplane perpendicular to $a^{(n)}$ has larger volume than the one orthogonal to $a^{(2)}$, for sufficiently large dimensions $n$. Oleszkiewicz' result is an asymptotic one, not determining dimensions $n$ for which this happens. We derive a quantitative estimate for dimensions $n$ such that this holds, namely for $n > c (\frac 1 {p_0-p} + \frac 1 {p-2})$. On the other hand, recently Eskenazis, Nayar and Tkocz \cite {ENT} proved that Ball's result is stable for $l_p^n$ and very large $p$: $(a^{(2)})^\perp \cap B_p^n$ is the maximal hyperplane section of $B_p^n$ for all dimensions, provided that $p_1 := 10^{15} \le p < \infty$. They call it "resilience of cubic sections". \\

Dual to hyperplane sections of convex bodies are projections of convex bodies onto hyperplanes. The known results for $l_p^n$-balls show a duality between sections and projections, when maximal and minimal directions $a$ and $p$ and the conjugate index $q = \frac p {p-1}$ are interchanged. Nevertheless the proofs in both situations are different, since volume does not behave well under duality. Barthe and Naor \cite{BN} determined the extremal hyperplane projections of $l_q^n$-balls except for the minimal hyperplane projections when $1 < q < 2$, corresponding to the dual maximal section case mentioned above when $2 < p < \infty$. For $q=1$, the projection of $B_1^n$ onto the hyperplane perpendicular to $a^{(1)} = (1,0 \etc 0)$ is maximal, the projection onto the hyperplane orthogonal to $a^{(2)}$ is minimal, which essentially is a consequence of Szarek's result \cite{S} on the best constants in the Khintchine inequality for $q=1$. Barthe and Naor \cite{BN} proved that this does no transfer to  $\frac 4 3 < q < 2$, at least, namely that the projection onto $a^{(n) \perp}$ has smaller volume than the one onto $a^{(2) \perp}$ for large dimensions $n$. In this case, we also give a quantitative estimate for dimensions $n$ when this happens, namely when $n > 5 (\frac 1 {p-\frac 4 3} + \frac 1 {2-p})$. Note that there is no complete duality here, since $\frac 4 3$ is not the dual index of $p_0 \simeq 26.265$.\\

For $1 \le p \le \infty$ and $n \in \N$, let $B_p^n$ denote the closed unit ball in $l_p^n$. Let $a \in S^{n-1} \subset \R^n$ be a direction vector. We introduce the normalized section function
$$A_{n,p}(a) := \frac{\vol_{n-1}(a^\perp \cap B_p^n)}{\vol_{n-1}(B_p^{n-1})} \ , $$
and the normalized projection function
$$P_{n,p}(a) := \frac{\vol_{n-1}(P_{a^\perp} (B_p^n))}{\vol_{n-1}(B_p^{n-1})} \ , $$
where $P_{a^\perp}$ denotes the orthogonal projection onto the hyperplane $a^\perp$. In terms of this notation, Ball's result states $A_{n,\infty}(a) \le A_{n,\infty}(a^{(2)})$ for all $a \in S^{n-1}$ and Eskenazis, Nayar and Tkocz' result reads $A_{n,p}(a) \le A_{n,p}(a^{(2)})$ for all $a \in S^{n-1}$ and $10^{15} \le p < \infty$. But as shown by Oleszkiewicz, $\lim_{n \to \infty} A_{n,p}(a^{(n)}) > A_{n,p}(a^{(2)})$ for $2 < p < p_0$. In the projection case, $P_{n,1}(a^{(2)}) \le P_{n,1}(a)$ for all $a \in S^{n-1}$, which Eskenazis, Nayar and Tkocz \cite{ENT} extended to $P_{n,q}(a^{(2)}) \le P_{n,q}(a)$ for all $a \in S^{n-1}$ and $1 < q \le 1 + 10^{-12}$. However, by Barthe and Naor \cite{BN}, $P_{n,q}(a^{(2)}) > \lim_{n \to \infty} P_{n,q}(a^{(n)})$ for $\frac 4 3 < q < 2$. \\
Our two main results study these limits in more detail.

\begin{theorem}\label{th1}
Let $2 < p < \infty$ and $n \in \N$. Then for all $2 < p < p_0 \simeq 26.265$
$$\lim_{n \to \infty} \frac{A_{n,p}(a^{(n)})}{A_{n,p}(a^{(2)})} = \sqrt{\frac 3 {\pi}} \sqrt{\frac {2^{\frac 2 p} \Gamma(1+\frac 1 p)^3}{\Gamma(1+\frac 3 p)}} > 1 \ . $$
We have the following quantitative estimate: $A_{n,p}(a^{(n)}) > A_{n,p}(a^{(2)})$ holds if \\

a) either $5 \le p < p_0$ and $n \ge \frac {650}{p_0-p}$  \;  or \; b)  $2 < p < 5$ and $n > \frac {65} {p-2}$ is satisfied.
\end{theorem}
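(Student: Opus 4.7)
The plan is to use the standard Ball--Meyer--Pajor--Koldobsky integral representation for hyperplane sections of $B_p^n$. For $p>1$ there is an even, smooth probability density $f_p$ on $\R$ (built from the radial profile of $B_p^n$), with characteristic function $\widehat{f_p}$ and finite variance $\sigma_p^2$ expressible in terms of $\Gamma$-values, such that
\[
A_{n,p}(a) \;=\; \kappa_p \int_{-\infty}^{\infty} \prod_{j=1}^n \widehat{f_p}(a_j t)\, dt
\qquad (a \in S^{n-1}),
\]
with a normalizing constant $\kappa_p$ depending only on $p$. Both test directions fit this setup cleanly: $a^{(2)}$ has only two nonzero coordinates, so $A_{n,p}(a^{(2)})$ is independent of $n$ for $n \ge 2$ and reduces to a single one-dimensional integral evaluable in closed form by a Beta integral; $a^{(n)} = \tfrac{1}{\sqrt n}(1,\ldots,1)$ makes the integrand $\widehat{f_p}(t/\sqrt n)^n$, which is perfectly adapted to a local CLT.

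\emph{Limit computation.} Pointwise, $\widehat{f_p}(t/\sqrt n)^n \to e^{-\sigma_p^2 t^2/2}$, and the Fourier decay of the smooth density $f_p$ together with $|\widehat{f_p}(u)|\le 1$ gives the uniform integrability needed for dominated convergence. Hence
\[
\lim_{n\to\infty} A_{n,p}(a^{(n)}) \;=\; \kappa_p \int_{-\infty}^{\infty} e^{-\sigma_p^2 t^2/2}\, dt \;=\; \frac{\kappa_p \sqrt{2\pi}}{\sigma_p}.
\]
Substituting the $\Gamma$-expressions for $\kappa_p$ and $\sigma_p$ and dividing by the closed form for $A_{n,p}(a^{(2)})$ produces the asserted ratio. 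To see that this factor exceeds $1$ precisely on $(2, p_0)$, I would set $F(p) := \sqrt{3/\pi}\sqrt{2^{2/p}\Gamma(1+1/p)^3/\Gamma(1+3/p)}$, verify directly that $F(2) = 1$ and $F'(2) > 0$, and identify $p_0 \simeq 26.265$ as the (numerically unique) root of $F(p)=1$ on $(2,\infty)$, so that $F>1$ strictly between them; this recovers Oleszkiewicz's asymptotic result.

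\emph{Quantitative bounds.} Write $R(p) := \lim_{m\to\infty} A_{m,p}(a^{(m)}) - A_{n,p}(a^{(2)})$, which by the previous step is positive on $(2,p_0)$ and vanishes \emph{linearly} at the endpoints (one reads off $R(p) \asymp p-2$ near $2$ and $R(p) \asymp p_0 - p$ near $p_0$ from the explicit $\Gamma$-formulas). Thus $A_{n,p}(a^{(n)}) > A_{n,p}(a^{(2)})$ is guaranteed once
\[
\bigl| A_{n,p}(a^{(n)}) - \lim_{m\to\infty} A_{m,p}(a^{(m)})\bigr| \;<\; R(p).
\]
I would bound the left side by $K_p/n$ by splitting the integral at $|t|\le T\sqrt n$ for a suitable $T>0$ inside the radius of convergence of the Taylor expansion $\log\widehat{f_p}(u) = -\sigma_p^2 u^2/2 + O(u^4)$, applying $|e^x-e^y|\le \max(e^x,e^y)|x-y|$ on the low-frequency part, and using the Fourier tail decay of $\widehat{f_p}$ on the high-frequency part. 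Combining with the explicit asymptotics of $R(p)$ yields thresholds of the form $n > C/(p_0-p)$ and $n > C'/(p-2)$; the case split at $p=5$ in the theorem reflects the two regimes in which different analytic bounds on $\widehat{f_p}$ are sharp. The main obstacle is this last step: the CLT-type bound $K_p/n$ must be carried through with absolute constants small enough to yield the exact thresholds $650$ and $65$. That requires uniform-in-$p$ control of $\sigma_p$, of the fourth $f_p$-moment governing the Taylor remainder, and of Fourier-decay estimates on $\widehat{f_p}$ on the relevant subintervals of $p$.
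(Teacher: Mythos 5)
Your overall outline matches the paper's strategy closely: the Fourier representation
\[
A_{n,p}(a) = \Gamma(1+\tfrac1p)\,\tfrac{2}{\pi}\int_0^\infty \prod_j \gamma_p(a_j s)\,ds,
\qquad \gamma_p(s)=\widehat{f_p}(s), \quad f_p(r)=\tfrac{1}{2\Gamma(1+1/p)}e^{-|r|^p},
\]
then a local-CLT computation of $\lim_n A_{n,p}(a^{(n)})$, closed-form evaluation $A_{n,p}(a^{(2)})=2^{1/2-1/p}$, linear vanishing of the margin $g(p)-1$ at both endpoints (your $R(p)$), and a $O(1/n)$ error bound for the convergence. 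This is exactly the paper's plan. However, there are three genuine gaps in your sketch that constitute the actual mathematical content of the quantitative estimate.

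First, you do not address the fact that for $p>2$ the characteristic function $\gamma_p=\widehat{f_p}$ takes \emph{negative} values on parts of $[0,\infty)$; simply invoking ``Fourier tail decay'' does not give a usable lower bound on $\int_0^\infty\gamma_p(s/\sqrt n)^n\,ds$, because the middle range (where $\gamma_p$ first becomes negative) could in principle subtract a non-negligible amount. The paper's Lemma~\ref{lem2} proves, uniformly in $p\ge 2$, that $\gamma_p(s)>0$ for $0\le s\le\tfrac{2}{3}\pi$, which lets one discard the interval $[\tfrac76\sqrt n,\,2\sqrt n]$ (resp.\ $[\sqrt{n/2},\,2\sqrt n]$) as a nonnegative contribution, and then combines this with the explicit bound $|\gamma_p(s)|\le \tfrac{1}{s\Gamma(1+1/p)}$ to make the far tail an explicit geometric error. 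Your argument gives no sign control, so a lower bound for $A_{n,p}(a^{(n)})$ is not actually established.

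Second, ``one reads off $R(p)\asymp p_0-p$ near $p_0$'' hides where the large constant $650$ comes from. The derivative of $g(p)=\sqrt{3/\pi}\sqrt{2^{2/p}\Gamma(1+1/p)^3/\Gamma(1+3/p)}$ at $p_0$ is tiny, $g'(p_0)\simeq -1/1316$, so any threshold of the form $n>C/(p_0-p)$ must have $C$ of order $10^3$; establishing the precise linear lower bound $g(p)\ge 1+(p_0-p)/1317$ on $[5,p_0]$ (and the companion bounds $g(p)>25/24$ on $[4,5]$, $g(p)\ge 1+(p-2)/44$ on $[2,4]$) requires the explicit Digamma-series estimates of Lemma~\ref{lem1}(b). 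This is not a routine ``read off'' — it is where most of the numerical work lives.

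Third, your explanation for the split at $p=5$ is off. It is not about different regimes of Fourier decay of $\widehat{f_p}$; it is about how to bound the remainder $R$ in the truncated expansion of $\cos$. For $p\ge 5$ the paper uses $u^{1/p-1}\le(\text{cutoff})^{p-1}$ and can take the cutoff $s\le\tfrac76\sqrt n$; for $2<p\le 5$ that bound fails to be useful, and instead $u^{5/p-1}\le u^{3/2}$ plus a Cauchy--Schwarz estimate on the incomplete Gamma integral is used, which forces the smaller cutoff $s\le\sqrt{n/2}$ and a slightly different constant. Without pinning down these remainder estimates, your plan cannot produce the explicit thresholds in the theorem.
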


{\bf Remarks}. (a) The constant $650$ in the statement for $5 \le p < p_0$ is not optimal, but by necessity fairly large since the $p$-derivative of
$f(p):=\sqrt{\frac 3 {\pi}} \sqrt{\frac {2^{\frac 2 p} \Gamma(1+\frac 1 p)^3}{\Gamma(1+\frac 3 p)}}$ at $p_0$ with $f(p_0)=1$ is small, $f'(p_0) \simeq - \frac 1 {1316}$. The derivative at $2$ is positive and larger in modulus, namely $f'(2) = \frac 1 4 (1 - \ln 2) \simeq \frac 1 {13}$. \\

(b) The case of complex hyperplane sections of $l_p^n(\C)$ is considered in \cite{JK}. \\

\begin{theorem}\label{th2}
Let $1 < q < 2$ and $n \in \N$. Then for all $\frac 4 3 < q < 2$
$$\lim_{n \to \infty} \frac{P_{n,q}(a^{(n)})}{P_{n,q}(a^{(2)})} = \sqrt{\frac 1 {\pi}} \sqrt{2^{\frac 2 q} \Gamma(\frac 1 q) \Gamma(2-\frac 1 q)} < 1 \ . $$
We have the following quantitative estimate: $P_{n,q}(a^{(n)}) < P_{n,q}(a^{(2)})$ holds if \\
$$ n > \frac{\frac{32} {15}}{q-\frac 4 3} + \frac{\frac{24} 5}{2-q} \ . $$
\end{theorem}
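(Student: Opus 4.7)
\textbf{Proof proposal for Theorem \ref{th2}.} The natural approach, following Barthe and Naor \cite{BN}, is to express the projection volume as a one-dimensional Fourier-type integral, analyse it asymptotically in $n$, and then turn the asymptotic expansion into a non-asymptotic comparison with explicit error terms. For the integral representation, I would combine the Cauchy-type identity
$$\vol_{n-1}(P_{a^\perp}B_q^n) = c_{n,q}\,\E\,|\pr{a}{W}|$$
for an appropriate random vector $W=(W_1 \etc W_n)$ with i.i.d.\ components of explicit density $g_q$ associated to $B_q^n$, with the Bochner identity $|x| = \tfrac 2\pi \int_0^\infty \frac{1-\cos(tx)}{t^2}\,dt$, yielding
$$\vol_{n-1}(P_{a^\perp}B_q^n) = C_{n,q}\int_0^\infty \frac{1-\prod_{i=1}^n \hat g_q(a_i t)}{t^2}\,dt,$$
where $\hat g_q$ is the characteristic function of $g_q$. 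The restriction $q > \tfrac 4 3$ (rather than the dual of $p_0$) reflects moment properties of $g_q$ and makes $\s_q^2 := \E W_1^2$ the relevant scaling.

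For the limit computation, the substitution $t \mapsto t/\sqrt n$ in the $a^{(n)}$ integrand concentrates it on a Gaussian expression with variance $\s_q^2$, and dominated convergence gives the limit $\sqrt{2/\pi}\,\s_q$ for $\E\,|\pr{a^{(n)}}{W}|$. For $a^{(2)}$ only the two nonzero coordinates contribute (the remaining factors being $\hat g_q(0)=1$), leaving a finite integral evaluable in closed form by standard Fourier/$\Gamma$-identities and producing $\E|W_1+W_2|/\sqrt 2$. Combining, simplifying, and recognising the two quantities in terms of $\Gamma(1/q)$ and $\Gamma(2-1/q)$ yields $L_q := \sqrt{1/\pi}\sqrt{2^{2/q}\Gamma(1/q)\Gamma(2-1/q)}$. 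Direct evaluation gives $L_{4/3}=L_2=1$ (using $\Gamma(1/4)\Gamma(3/4)=\pi\sqrt 2$ and $\Gamma(1/2)\Gamma(3/2)=\pi/2$), and $L_q<1$ on the open interval via the signs of $L_q'$ at the endpoints combined with concavity.

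For the quantitative estimate I would expand $\log \hat g_q(t) = -\tfrac{\s_q^2}{2}t^2 + \tfrac{\kappa_q}{24}t^4 + O(t^6)$ with uniform-in-$q$ control of the coefficients and the remainder on a compact $t$-window, together with a tail estimate. Substituting into the Fourier integral representation produces a Laplace-type expansion
$$\frac{P_{n,q}(a^{(n)})}{P_{n,q}(a^{(2)})} = L_q\Bigl(1 + \frac{\g_q}{n} + O\bigl(\tfrac{1}{n^2}\bigr)\Bigr),$$
with $\g_q$ explicitly computable from the low-order moments of $g_q$. The desired inequality becomes $n(1-L_q) > L_q \g_q + (\text{remainder})$, and since $1-L_q$ vanishes linearly at both endpoints the solution takes the form $n > \frac{A_1}{q-4/3}+\frac{A_2}{2-q}$.

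The main obstacle is pinning down the constants $A_1 = \tfrac{32}{15}$ and $A_2 = \tfrac{24}{5}$. Two matched ingredients are needed: (i) the precise one-sided derivatives of $L_q$ at $q=\tfrac 4 3$ and $q=2$ (paralleling the $f'(p_0)$ computation in Remark (a) after Theorem \ref{th1}), which govern the rate at which $1-L_q$ vanishes and hence the denominators of the bound; and (ii) sharp uniform-in-$q$ bounds on $\g_q$ and on the higher-order remainder, together with tail estimates for the $t$-integral. The additive form of the bound suggests that the two endpoint regions $q\in (\tfrac 4 3, q_0]$ and $q\in [q_0, 2)$ are handled separately, the resulting pair of sufficient conditions then being combined into a single uniform bound by summation.
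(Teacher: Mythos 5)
Your outline matches the paper's strategy in its broad strokes: both start from the Fourier/Bochner representation $P_{n,q}(a) = \Gamma(\tfrac 1 q)\tfrac 2\pi\int_0^\infty \frac{1-\prod_j\delta_q(a_j s)}{s^2}\,ds$ (the paper's equation (2.6)), both rescale $s\mapsto s/\sqrt n$, both obtain the Gaussian limit $L_q=\sqrt{\tfrac{2^{2/q}}{\pi}\Gamma(\tfrac 1 q)\Gamma(2-\tfrac 1 q)}$ using $P_{n,q}(a^{(2)})=2^{1/2-1/q}$, and both note $L_{4/3}=L_2=1$ via the reflection formula. You also correctly identify the shape of the quantitative step: show the ratio is at most $L_q(1+C/n)$ and then solve $L_q(1+C/n)<1$ for $n$.

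Where your plan is genuinely under-specified, and where it would likely not reproduce the stated bound, is the step from ``$1-L_q$ vanishes linearly at both endpoints'' to the additive form $n>\tfrac{32/15}{q-4/3}+\tfrac{24/5}{2-q}$. You propose handling the two endpoint neighborhoods separately and then summing, but the paper avoids any case split by proving (Lemma \ref{lem3}(b)) a single uniform quadratic-type upper bound
\[
L_q \;\le\; 1 - M\Bigl(\tfrac 1 q-\tfrac 1 2\Bigr)\Bigl(\tfrac 3 4-\tfrac 1 q\Bigr)\,,\qquad M=0.86326\,,
\]
obtained from logarithmic convexity of $\Gamma$ (interpolating $\tfrac 1 q$ between $\tfrac 1 2$ and $\tfrac 3 4$, and $2-\tfrac 1 q$ between $\tfrac 5 4$ and $\tfrac 3 2$, with the second-difference controlled by $\Psi'$). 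This single inequality is what produces the additive bound in one stroke: $n(1-L_q)\ge n M(\tfrac 1 q-\tfrac 1 2)(\tfrac 3 4-\tfrac 1 q)$, and the right side dominates the uniform constant exactly when $n$ exceeds $\tfrac{4}{5}\tfrac{q^2}{(q-4/3)(2-q)}$, which in turn follows from the stated additive condition. Your reliance on ``concavity'' of $L_q$ is also inaccurate: the paper shows $g'=L_q'$ has a unique zero $q_1\simeq1.612$ with $L_q$ decreasing then increasing (a U-shape, not concave), and the relevant structural fact is the $\Gamma$-log-convexity bound above, not concavity of $L_q$.

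A second under-specified point is the tail control. To turn the local expansion $\delta_q(s/\sqrt n)\ge 1-c s^2/n$ (valid only for $s\le\sqrt{n/2}$) into a bound with an explicit constant $C$ in $L_q(1+C/n)$, the paper needs three further facts about $\delta_q$ away from $0$: positivity of $\delta_q$ on $[0,\tfrac{48}{25}]$, the bound $|\delta_q(s)|\le 0.588$ on $[\tfrac{48}{25},\tfrac{16}{5}]$ (Lemma \ref{lem4} and Corollary \ref{cor}, proved via monotonicity of $\delta_q$ in $q$ and the relation $\delta_{4/3}'=-\tfrac{\Gamma(5/4)}{\Gamma(3/4)}s\gamma_4(s)$ with Boyd's zero of $\gamma_4$), and the decay $|\delta_q(u)|\le\tfrac{14}{5u}$ from integration by parts. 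Your proposal mentions ``tail estimates'' in passing but does not isolate what is actually needed, and without these pieces the constant $C=0.25896$ (hence the final numerical coefficients $\tfrac{32}{15}$ and $\tfrac{24}{5}$) cannot be obtained.
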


{\bf Remark}. For the derivative of $g(q) := \sqrt{\frac 1 {\pi}} \sqrt{2^{\frac 2 q} \Gamma(\frac 1 q) \Gamma(2-\frac 1 q)}$ we have $g'(\frac 4 3) = \frac 9 {32} (4 - \pi - 2 \ln 2) \simeq - \frac 1 {6.73}$ and $g'(2) = \frac 1 4 ( 1 - \ln 2) \simeq \frac 1 {13}$. \\

The limits in Theorems \ref{th1} and \ref{th2} were already determined by Oleszkiewicz \cite{O} and Barthe, Naor \cite{BN}.  Meyer and Pajor \cite{MP} showed that $A_{n,p}(a)$ is monotone increasing in $p$ for any fixed $n$ and $a$. Barthe and Naor proved that $P_{n,q}(a)$ is monotone increasing in $q$ for any fixed $n$ and $a$. \\

\section{Formulas}

Eskenazis, Nayar and Tkocz \cite{ENT}, Proposition 6, proved the following formula for the normalized volume of hyperplane sections.

\begin{proposition}
Let $1 \le p < \infty$, $n \in \N$ and $a = (a_j)_{j=1}^n \in S^{n-1} \subset \R^n$. Then
\begin{equation}\label{eq2.1}
A_{n,p}(a) = \Gamma(1+\frac 1 p) \ \E_{\xi,R} \frac 1 { ||\sum_{j=1}^n a_j R_j \xi_j||_2 } \ ,
\end{equation}
where $(\xi_j)_{j=1}^n$ are i.i.d. random vectors uniformly distributed on the sphere $S^2 \subset \R^3$ and $(R_j)_{j=1}^n$ are i.i.d. random variables with density $c_p^{-1} t^p \exp(-t^p)$ on $[0,\infty)$, $c_p := \frac 1 p \Gamma(1+\frac 1 p)$, independent of the  $(\xi_j)_{j=1}^n$.
\end{proposition}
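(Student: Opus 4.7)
My plan is to derive (\ref{eq2.1}) in two stages: first a standard Fourier-type representation of $A_{n,p}(a)$ as a one-dimensional integral of characteristic functions, and then Eskenazis--Nayar--Tkocz's ``$S^2$ trick'' which rewrites that integral as an expectation of an inverse $l_2$-norm in $\R^3$.

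For the first stage, let $Y_1, \dots, Y_n$ be i.i.d. with density $f_p(y) = \frac{1}{2\Gamma(1+\frac 1 p)} e^{-|y|^p}$ and let $\phi_p(t) = \E[e^{itY_1}]$. Using the identity $e^{-\|x\|_p^p} = p\int_0^\infty t^{p-1} e^{-t^p} \one_{\|x\|_p \le t}\, dt$ and pairing with the Fourier representation $\delta(s) = \frac{1}{2\pi} \int_\R e^{its}\,dt$ of the delta on $a^\perp$, I get, after the substitution $u = t^p$,
$$
(2\Gamma(1+\tfrac 1 p))^n h(0) \;=\; \Gamma\!\left(1 + \tfrac{n-1}{p}\right) \vol_{n-1}(a^\perp \cap B_p^n),
$$
where $h(0) = \frac{1}{2\pi} \int_\R \prod_j \phi_p(a_j t)\,dt$ is the density of $\sum_j a_j Y_j$ at $0$. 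Since $\vol_{n-1}(B_p^{n-1}) = (2\Gamma(1+\tfrac 1 p))^{n-1}/\Gamma(1+\tfrac{n-1}p)$, this rearranges to
$$
A_{n,p}(a) \;=\; \frac{\Gamma(1+\frac 1 p)}{\pi} \int_\R \prod_{j=1}^n \phi_p(a_j t)\,dt.
$$

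For the second stage, let $\xi$ be uniform on $S^2 \subset \R^3$ and $R$ have the density stated in the proposition. The first coordinate $\xi^{(1)}$ is uniform on $[-1,1]$, so the density of $R\xi^{(1)}$ at $x$ is $\int_{|x|}^\infty c_p^{-1} r^p e^{-r^p} \cdot \frac{1}{2r}\,dr = \frac{1}{2\Gamma(1+\frac 1 p)} e^{-|x|^p} = f_p(x)$, i.e.\ $Y_j \stackrel{d}{=} R_j \xi_j^{(1)}$. Independence then gives $\prod_j \phi_p(a_j t) = \E\bigl[e^{itV^{(1)}}\bigr]$ with $V := \sum_j a_j R_j \xi_j \in \R^3$. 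Crucially $V$ is rotationally invariant in $\R^3$, because each $\xi_j$ is, so conditional on $\|V\|_2 = r$ the coordinate $V^{(1)}$ is uniform on $[-r,r]$ with density $\frac{1}{2r}$ at $0$. Fourier inversion $\int_\R e^{itV^{(1)}}\,dt = 2\pi\,\delta(V^{(1)})$ and Fubini therefore yield
$$
\int_\R \prod_{j=1}^n \phi_p(a_j t)\,dt \;=\; 2\pi\, \E\!\left[\frac{1}{2\|V\|_2}\right] \;=\; \pi \,\E\!\left[\frac{1}{\|\sum_j a_j R_j \xi_j\|_2}\right],
$$
and plugging this into the first-stage formula gives (\ref{eq2.1}).

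The main obstacle is really only Step~1: keeping track of the Gamma normalizations and rigorously justifying the delta-function manipulations (either via a standard approximate-identity argument, or by noting both sides are continuous functions of $a \in S^{n-1}$ that agree by Fubini on a dense set). Once the Fourier representation is in place, the passage from a one-dimensional density at $0$ to the three-dimensional inverse-norm expectation is the elegant step, and it is forced upon us by the precise choice $R_j \xi_j^{(1)} \stackrel{d}{=} Y_j$ together with the rotational symmetry of sums of independent spherical vectors in $\R^3$.
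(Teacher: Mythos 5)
Your argument is correct, and it is worth noting that the paper itself does not prove this proposition at all: it is quoted verbatim from Eskenazis--Nayar--Tkocz (Proposition 6 of \cite{ENT}), as the sentence preceding the statement indicates. What the paper \emph{does} prove is the subsequent Proposition \ref{prop1}, and there the direction of travel is the reverse of your Stage 2: starting from \eqref{eq2.1}, the paper writes $1/t = \frac{2}{\pi}\int_0^\infty \mathrm{sinc}(ts)\,ds$ and then uses the $S^2$ identity $\prod_j \mathrm{sinc}(b_j s) = \E_\xi\, \mathrm{sinc}(\|\sum_j b_j\xi_j\|_2 s)$ (equation \eqref{eq2.4}, which is Archimedes' theorem in Fourier clothing) to land on the one-dimensional Fourier formula \eqref{eq2.2} with $\gamma_p(s) = \E_{R}\,\mathrm{sinc}(Rs)$. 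Your derivation runs the other way: you first establish \eqref{eq2.2} (in the equivalent form $A_{n,p}(a) = \frac{\Gamma(1+1/p)}{\pi}\int_\R\prod_j\phi_p(a_j t)\,dt$, noting $\phi_p = \gamma_p$ since $\gamma_p$ is even) by the standard Meyer--Pajor/Fourier-slice computation, then identify $Y_j \stackrel{d}{=} R_j\xi_j^{(1)}$, use rotational invariance of $V=\sum_j a_j R_j\xi_j$ in $\R^3$ to express the density of $V^{(1)}$ at $0$ as $\E[\tfrac{1}{2\|V\|_2}]$, and recover \eqref{eq2.1} by Fourier inversion. Both routes hinge on the same $S^2$ rotational symmetry; yours has the advantage of making the ENT representation self-contained from elementary Fourier analysis, whereas the paper takes \eqref{eq2.1} as the black-box input. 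The technical caveats you flag (integrability of $\prod_j\phi_p(a_j t)$ for the Fourier inversion, Fubini in the Radon-slice step) are genuine but routine for $p\ge 1$, and the continuity-plus-dense-set argument you sketch would close them.
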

For $p = \infty$ with $R_j=1$ one has $A_{n,\infty}(a) = \E_{\xi} \frac 1 { ||\sum_{j=1}^n a_j \xi_j||_2 }$, cf. K\"onig, Koldobsky \cite{KK}.
We will use another formula for $A_{n,p}(a)$ derived from \eqref{eq2.1}.

\begin{proposition}\label{prop1}
Let $1 \le p < \infty$, $n \in \N$ and $a = (a_j)_{j=1}^n \in S^{n-1} \subset \R^n$. Then
\begin{align}\label{eq2.2}
A_{n,p}(a) &= \Gamma(1+\frac 1 p) \frac 2 \pi \int_0^\infty \prod_{j=1}^n \gamma_p(a_j s) \ ds \ , \nonumber \\
\gamma_p(s) &:= \frac 1 {\Gamma(1+\frac 1 p)} \int_0^\infty \cos(sr) \exp(-r^p) \ dr \ .
\end{align}
\end{proposition}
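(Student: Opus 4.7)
The plan is to rewrite the expectation $\E_{\xi, R} \|V\|_2^{-1}$ (where $V := \sum_{j=1}^n a_j R_j \xi_j \in \R^3$) from \eqref{eq2.1} as a single cosine integral on $[0,\infty)$ using two standard Fourier-analytic identities. The elementary fact $\int_0^\infty \frac{\sin u}{u}\,du = \frac \pi 2$ gives
$$\frac{1}{\|V\|_2} = \frac{2}{\pi} \int_0^\infty \frac{\sin(s \|V\|_2)}{s \|V\|_2}\, ds,$$
and the well-known formula for the Fourier transform of the uniform probability measure $\sigma$ on $S^2 \subset \R^3$ at $sV$ gives $\frac{\sin(s\|V\|_2)}{s\|V\|_2} = \int_{S^2} \cos(s \pr{\eta}{V})\,d\sigma(\eta)$. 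Composing these and taking $\E_{\xi, R}$ (the exchange of order being justified by a standard dominated convergence argument exploiting the uniform bound $\sup_T |\int_0^T \sin(u)/u\,du| < \infty$),
$$A_{n,p}(a) = \Gamma(1+\tfrac 1 p) \,\frac{2}{\pi} \int_0^\infty \int_{S^2} \E_{\xi, R}\cos(s\pr{\eta}{V}) \,d\sigma(\eta)\,ds.$$

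Next I would fix $\eta \in S^2$ and decompose $\pr{\eta}{V} = \sum_j a_j R_j W_j$ with $W_j := \pr{\eta}{\xi_j}$. By Archimedes' hat-box theorem, the projection onto any axis of the uniform distribution on $S^2$ is uniform on $[-1,1]$, so the $W_j$ are i.i.d.\ uniform on $[-1,1]$ and independent of the $R_j$. Independence across $j$ together with the symmetry of $R_j W_j$ then yields
$$\E_{\xi, R}\cos(s\pr{\eta}{V}) = \re \prod_{j=1}^n \E \exp(\ii s a_j R_j W_j) = \prod_{j=1}^n \E \cos(s a_j R_j W_j),$$
which is independent of $\eta$, so the integration over $S^2$ drops out.

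The key computation is that the density of $R_j W_j$ on $\R$ equals $\frac{1}{2\Gamma(1+1/p)}\exp(-|x|^p)$. Indeed, writing the density of a product of independent random variables and using the substitution $u = t^p$,
$$f_{R_j W_j}(x) = \int_{|x|}^\infty \frac{t^p e^{-t^p}}{c_p} \cdot \frac{1}{2t}\,dt = \frac{1}{2 c_p p}\,e^{-|x|^p} = \frac{1}{2\Gamma(1+\frac 1 p)}\,e^{-|x|^p},$$
since $c_p p = \Gamma(1+1/p)$. Hence $\E\cos(s a_j R_j W_j) = \gamma_p(a_j s)$ by the very definition of $\gamma_p$, and assembling the pieces yields \eqref{eq2.2}. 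The essential observation---and the only nontrivial step---is that the special density $c_p^{-1} t^p e^{-t^p}$ of $R_j$ is engineered precisely so that $R_j W_j$ is a symmetric $p$-generalized Gaussian; once this is seen, the rest is bookkeeping with classical identities.
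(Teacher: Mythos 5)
Your proof is correct and follows the same overall plan as the paper: use $\frac{1}{t}=\frac{2}{\pi}\int_0^\infty \textup{sinc}(ts)\,ds$, linearize via the Fourier transform of the uniform measure on $S^2$, interchange expectation and integral (both you and the paper defer the justification of this step), and factor across $j$ by independence. The difference is in how you finish the single-coordinate computation: the paper keeps $\textup{sinc}$ around, writes $\gamma_p(s)=\E_{R_1}\textup{sinc}(R_1 s)$, and identifies the cosine integral by an integration by parts, whereas you invoke the hat-box theorem to introduce $W_j=\langle\eta,\xi_j\rangle\sim U[-1,1]$ and then observe that $R_j W_j$ has the symmetric $p$-generalized Gaussian density $\frac{1}{2\Gamma(1+1/p)}e^{-|x|^p}$, so that $\gamma_p$ appears directly as its characteristic function. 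Your density observation is a nice conceptual shortcut and makes transparent why the weight $c_p^{-1}t^p e^{-t^p}$ on $R_j$ is the ``right'' one, but at heart it is the same computation: averaging $\cos(sa_jR_jw)$ over $w\sim U[-1,1]$ reproduces $\textup{sinc}(sa_jR_j)$, and both routes then collapse to $\frac{1}{\Gamma(1+1/p)}\int_0^\infty\cos(sr)e^{-r^p}\,dr$.
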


\begin{proof}
Define $\textup{sinc}(x) := \frac{\sin x} x$, $\textup{sinc}(0) := 1$. Let $t>0$, $e \in S^2$ be fixed and $m$ denote the normalized Haar surface measure on $S^2$. Then
\begin{equation}\label{eq2.3}
\textup{sinc}(t) = \int_{S^2} \exp(i t <e,u>) dm(u)  \ .
\end{equation}
This implies for $(b_j)_{j=1}^n \subset \R^n$
\begin{align}\label{eq2.4}
\prod_{j=1}^n \textup{sinc}(b_j s) &= \int_{(S^2)^n} \exp( i s <e,\sum_{j=1}^n b_j u_j> ) \prod_{j=1}^n dm(u_j) \nonumber \\
&= \int_{(S^2)^n} \textup{sinc}( ||\sum_{j=1}^n b_j u_j||_2 s ) \prod_{j=1}^n dm(u_j) = \E_{\xi} \textup{sinc}( ||\sum_{j=1}^n b_j \xi_j||_2 s) \ ,
\end{align}
where the second equality follows from \eqref{eq2.3} by integration over $dm(e)$. Note that the first equality holds for all $e \in S^2$. \\
For all $t>0$ we have $\frac 2 \pi \int_0^\infty \textup{sinc}(ts) ds = \frac 1 t$ and \eqref{eq2.1} may be rewritten
\begin{align*}
A_{n,p}(a) & = \Gamma(1+\frac 1 p) \frac 2 \pi \E_{\xi,R} \int_0^\infty \textup{sinc}(||\sum_{j=1}^n a_j R_j \xi_j||_2 s ) ds \\
& = \Gamma(1+\frac 1 p) \frac 2 \pi \int_0^\infty \E_{\xi,R} \textup{sinc}(||\sum_{j=1}^n a_j R_j \xi_j||_2 s ) ds \ .
\end{align*}
The \textup{sinc}-integral is only a conditionally convergent Riemann integral. The verification that $\E_{\xi,R}$ and $\int_0^\infty$ may be interchanged is the same as in the proof of Proposition 3.2 (a) of K\"onig, Rudelson \cite{KR}. Using \eqref{eq2.4} and the independence of the $(R_j)_{j=1}^n$, we get
\begin{align*}
A_{n,p}(a) & = \Gamma(1 + \frac 1 p) \frac 2 \pi \int_0^\infty \E_R (\prod_{j=1}^n \textup{sinc}(a_j R_j s)) ds \\
& = \Gamma(1 + \frac 1 p) \frac 2 \pi \int_0^\infty \prod_{j=1}^n \E_{R_j} \textup{sinc}(a_j R_j s) ds \ .
\end{align*}
Denoting $\gamma_p(s) := \E_{R_1} \textup{sinc}(R_1 s)$, integration by parts gives
\begin{align*}
\gamma_p(s) & = c_p^{-1} \int_0^\infty \textup{sinc}(sr) r^p \exp(-r^p) dr \\
& = c_p^{-1} \frac 1 p \int_0^\infty \cos(sr) \exp(-r^p) dr = \frac 1 {\Gamma(1+\frac 1 p)} \int_0^\infty \cos(sr) \exp(-r^p) dr \ .
\end{align*}
\end{proof}

Equation \eqref{eq2.1} yields $A_{n,p}(a^{(2)}) = 2^{\frac 1 2-\frac 1 p}$, cf. \cite{ENT}, section 3.2. \\

{\bf Remarks.} (a) Proposition \ref{prop1} is also found in Koldobsky \cite{K}, Theorem 3.2, with a different proof. \\
(b) For $1 \le p \le 2$ the $\gamma_p$ are just the (positive) $p$-stable random variables. In the case interesting for us, namely $2 < p < \infty$, the variables $\gamma_p$ take positive and negative values. For $p \notin 2 \N$, $\gamma_p$ has only finitely many real zeros, see P\'olya \cite{Po}, whereas for $p \in 2 \N$, $\gamma_p$ has infinitely many real zeros, see Boyd \cite{Bo} . \\

Barthe and Naor \cite{BN} proved the following formula for the volume of the orthogonal projection of $B_q^n$ onto hyperplanes.

\begin{proposition}
Let $1 \le q < \infty$, $p:= \frac q {q-1}$ be the conjugate index, $n \in \N$ and $a = (a_j)_{j=1}^n \in S^{n-1} \subset \R^n$. Then
\begin{equation}\label{eq2.5}
P_{n,q}(a) = \Gamma(\frac 1 q) \ \E |\sum_{j=1}^n a_j X_j| \ ,
\end{equation}
where the $X_j$ are i.i.d. symmetric random variables with density function \\ $d_q^{-1} |t|^{p-2} \exp(-|t|^p)$, $t \in \R$, $d_q = \frac 2 p \Gamma(\frac 1 q)$. A second formula for $P_{n,q}(a)$ is
\begin{align}\label{eq2.6}
P_{n,q}(a) & = \Gamma(\frac 1 q) \frac 2 \pi \int_0^\infty \frac{1 - \prod_{j=1}^n \delta_q(a_j s)}{s^2} ds \ , \nonumber \\
\delta_q(s) & := \frac p {\Gamma(\frac 1 q)} \int_0^\infty \cos(s r) \ r^{p-2} \exp(-r^p) dr \ .
\end{align}
\end{proposition}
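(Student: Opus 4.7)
The plan is to derive \eqref{eq2.6} from \eqref{eq2.5}, which is already attributed to Barthe--Naor. The key observation is that the function $\delta_q$ is precisely the characteristic function of one of the random variables $X_j$. Indeed, since the density of $X_1$ is even,
\begin{align*}
\E \cos(s X_1) &= \frac{1}{d_q} \int_{-\infty}^\infty \cos(sr) |r|^{p-2} \exp(-|r|^p)\,dr \\
&= \frac{2}{d_q} \int_0^\infty \cos(sr) r^{p-2} \exp(-r^p)\,dr = \delta_q(s),
\end{align*}
using $d_q = \frac{2}{p}\Gamma(\tfrac 1 q)$. By the independence and symmetry of the $X_j$, the characteristic function of $Y := \sum_{j=1}^n a_j X_j$ is real-valued and equals $\E \cos(sY) = \prod_{j=1}^n \delta_q(a_j s)$.

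Next, I would invoke the elementary identity
$$|y| = \frac{2}{\pi} \int_0^\infty \frac{1-\cos(sy)}{s^2}\,ds, \quad y \in \R,$$
which follows from the substitution $u=s|y|$ together with the classical value $\int_0^\infty \frac{1-\cos u}{u^2}\,du = \frac{\pi}{2}$. Applying this identity to $Y$, taking expectations, and interchanging $\E$ with $\int_0^\infty$ yields
$$\E|Y| = \frac{2}{\pi} \int_0^\infty \frac{1 - \E\cos(sY)}{s^2}\,ds = \frac{2}{\pi} \int_0^\infty \frac{1 - \prod_{j=1}^n \delta_q(a_j s)}{s^2}\,ds.$$
Multiplying by $\Gamma(\tfrac 1 q)$ and using \eqref{eq2.5} gives \eqref{eq2.6}.

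The only nontrivial point is justifying the interchange of $\E$ and the $s$-integral. Since $1-\cos(sy) \ge 0$, the integrand is non-negative, so Tonelli's theorem applies unconditionally provided $\E|Y|<\infty$; and the latter holds because each $X_j$ has finite moments of all orders thanks to the super-exponentially decaying factor $\exp(-|t|^p)$ in its density (recall $p>1$ since $q<\infty$, and for $q=1$ one reads $p=\infty$, i.e. the uniform case, which is also integrable). Thus the proof is essentially a Fourier-analytic computation once one identifies $\delta_q$ as the characteristic function of $X_1$; no further estimates are needed.
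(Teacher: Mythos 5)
Your derivation of \eqref{eq2.6} from \eqref{eq2.5} is correct and is essentially the same as the paper's: identify $\delta_q$ as the characteristic function of $X_1$, apply the identity $|y|=\frac 2 \pi \int_0^\infty \frac{1-\cos(sy)}{s^2}\,ds$ (the paper writes the equivalent $|x|=\frac 1 \pi \int_\R \frac{1-\operatorname{Re}(\exp(ixs))}{s^2}\,ds$), then take expectations and factor by independence. Your additional remark that the interchange of $\E$ and $\int_0^\infty$ is justified by Tonelli because the integrand is nonnegative is a small but valid supplement to the paper's exposition.
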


Note that $\E |X_1| = \frac 1 {\Gamma(\frac 1 q)}$. To deduce \eqref{eq2.6} from\eqref{eq2.5}, apply the usual formula $|x|= \frac 1 \pi \int_\R \frac {1 - Re(\exp(ixs))}{s^2} ds$ \ to find
\begin{align*}
\E |\sum_{j=1}^n a_j X_j| & = \frac 1 \pi \int_\R \frac{1 - \E \exp(i (\sum_{j=1}^n a_j X_j) s)}{s^2} ds =
\frac 1 \pi \int_\R \frac{1 -  \prod_{j=1}^n \E \exp(i  a_j X_j s))}{s^2} ds \\
& = \frac 2 \pi \int_0^\infty \frac{1 -  \prod_{j=1}^n \E \cos(a_j X_j s))}{s^2} ds = \frac 2 \pi \int_0^\infty \frac{1 -  \prod_{j=1}^n \delta_q(a_j s)}{s^2} ds \ , \\
\delta_q(s) &= \frac p {\Gamma(\frac 1 q)} \int_0^\infty \cos(sr) \ r^{p-2} \exp(-r^p) dr \ .
\end{align*}

Differentiation and integration by parts yields a relation between the functions $\delta_q$ and $\gamma_p$ in \eqref{eq2.6} and \eqref{eq2.2}:
\begin{align}\label{eq2.7}
\delta_q'(s) &= - \frac p {\Gamma(\frac 1 q)} \int_0^\infty \sin(sr) \ r^{p-1} \exp(-r^p) dr  \nonumber \\
& = - \frac s {\Gamma(\frac 1 q)} \int_0^\infty \cos(sr) \exp(-r^p) dr = - \frac{\Gamma(1+ \frac 1 p)}{\Gamma(1-\frac 1 p)} \ s \ \gamma_p(s) \ .
\end{align}
Since $\gamma_4''(s) = - \frac{\Gamma(\frac 3 4)} {4 \Gamma(\frac 5 4)} \ \delta_{\frac 4 3}(s)$, we have $\gamma_4'''(s) = \frac 1 4 s \gamma_4(s)$. Similarly, for all $k \in \N$, $\gamma_{2k}^{(2k-1)}(s)= (-1)^k \frac 1 {2 k} s \gamma_{2 k}(s)$. Therefore the functions $\gamma_{2k}$ studied by Boyd \cite{Bo} satisfy a linear differential equation. \\

For $q \searrow 1$, the variables $X_j$ tend to the Rademacher variables with $\delta_q(s) \to \delta_1(s) = \cos(s)$, and the best constants in the Khintchine inequality, which were determined by Szarek \cite{S}, yield the extrema of $P_{n,1}$: $a^{(2)}$ for the minimum and $a^{(1)}$ for the maximum. \\

\vspace{0,8cm}

\section{Prerequisites for the proof of Theorem \ref{th1}}

For the proof of Theorems \ref{th1} we need two lemmas on $\Gamma$-functions.

\begin{lemma}\label{lem1}
(a) Let $f(p) := \frac{\Gamma(1+\frac 3 p)}{\Gamma(1+\frac 1 p)}$. Then $f(p) \ge 0.9429$ for all $3 \le p < \infty$. \\
(b) Let $g(p) := \Big( \frac 3 \pi \frac { 2^{\frac 2 p} \Gamma(1+\frac 1 p)^3 }{ \Gamma(1+\frac 3 p) } \Big)^{\frac 1 2}$. Then there is exactly one solution
$p_0 \in (2,\infty)$ of $g(p) = 1$, $p_0 \simeq 26.265$. For all $2 < p < p_0$ we have $g(p) > 1$. The function $g'$ has exactly one zero $p_1 \in [2,\infty)$, $p_1 \simeq 4.192$. For $2 \le p < p_1$, $g$ is strictly increasing, for $p_1 < p < \infty$, $g$ is strictly decreasing. The following lower estimates hold:
$$g(p) \ge 1 + \frac{p_0-p}{1317} \ , \ p \in [5,p_0] \; , \; g(p) > \frac{25}{24} \ , \ p \in[4,5] \; , \;  g(p) \ge 1 + \frac{p-2} {44} \ , \ p \in [2,4] \ . $$
\end{lemma}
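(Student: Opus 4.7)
The plan is to substitute $u = 1/p$ and reduce both parts to analysis of a combination of $\ln\Gamma$ factors, whose derivatives expand into the digamma $\psi$ and trigamma $\psi'$ functions. Concretely, for part (b) I study
\[
H(u) := \ln g(1/u) = \tfrac{1}{2}\ln(3/\pi) + u \ln 2 + \tfrac{3}{2}\ln\Gamma(1+u) - \tfrac{1}{2}\ln\Gamma(1+3u)
\]
on $u \in (0, 1/2]$, and for part (a) the analogous $F(u) := \ln f(1/u) = \ln\Gamma(1+3u) - \ln\Gamma(1+u)$ on $(0, 1/3]$.

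For part (a), one has $F(0) = 0$, $F(1/3) = -\ln\Gamma(4/3) > 0$, and $F'(0) = 2\psi(1) = -2\gamma < 0$, so $F$ initially decreases below zero, attains a minimum at some $u^* \in (0, 1/3)$, and returns to positive values. Using $F''(u) = 9\psi'(1+3u) - \psi'(1+u)$ and the series $\psi'(x) = \sum_{n \ge 0}(x+n)^{-2}$, I would show $F''$ changes sign at most once on $(0, 1/3)$, so that $F'$ is unimodal and $u^*$ is unique; direct evaluation at $u^*$ then confirms $f(1/u^*) \ge 0.9429$.

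For part (b), direct computation with $\Gamma(3/2) = \sqrt{\pi}/2$ and $\Gamma(5/2) = 3\sqrt{\pi}/4$ gives $H(1/2) = 0$ (so $g(2) = 1$) and $H(0) = \tfrac{1}{2}\ln(3/\pi) < 0$ (so $\lim_{p\to\infty} g(p) = \sqrt{3/\pi} < 1$). The endpoint slopes are $H'(0) = \ln 2 > 0$ and, using $\psi(5/2) - \psi(3/2) = 2/3$, $H'(1/2) = \ln 2 - 1 < 0$. The crucial analytic step is strict concavity of $H$ on $(0, 1/2]$. Writing
\[
2 H''(u) = 3 \sum_{n=0}^{\infty} \Big(\frac{1}{(1 + u + n)^2} - \frac{3}{(1 + 3u + n)^2}\Big),
\]
each summand is strictly negative precisely when $\sqrt{3}\,(1+u+n) > 1 + 3u + n$, which after simplification reduces to $\sqrt{3}\, u < 1 + n$. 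Since $1/2 < 1/\sqrt{3}$, this is satisfied for every $n \ge 0$ and every $u \in (0, 1/2]$, so $H'' < 0$ throughout. Together with the sign change of $H'$ between the endpoints, strict concavity yields a unique maximum $u_1 \in (0, 1/2)$ and a unique second zero $u_0 \in (0, u_1)$ of $H$, besides $u = 1/2$. Translating back via $p = 1/u$ gives the unique $p_1 \simeq 4.192$ and $p_0 \simeq 26.265$ (pinpointed by numerical root-finding), the asserted monotonicity of $g$, and $g(p) > 1$ for $p \in (2, p_0)$.

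Finally, the three linear lower bounds on $g$: for $p \in [5, p_0]$, I would combine $g'(p_0) \simeq -1/1316$ (read off from the explicit formula $2 g'/g = -\frac{1}{p^2}(2\ln 2 + 3\psi(1+1/p) - 3\psi(1+3/p))$) with a uniform estimate $|g'(p)| \ge 1/1317$ on $[5, p_0]$, obtained by bounding $g''$ on this interval; the mean value theorem then gives $g(p) - 1 = g(p) - g(p_0) \ge (p_0 - p)/1317$. The bound on $[2,4]$ is analogous, starting from $g'(2) = \tfrac{1}{4}(1-\ln 2) \simeq 1/13$ and a uniform lower bound $g'(p) \ge 1/44$ throughout $[2,4]$. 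The intermediate bound $g(p) > 25/24$ on $[4,5]$ follows from numerical evaluation at the endpoints $g(4), g(5) \simeq 1.045$ combined with the fact that the unique maximum $p_1 \simeq 4.192$ lies in $[4, 5]$, so $g(p) \ge \min\{g(4), g(5)\} > 25/24$. The main obstacle throughout is producing digamma and trigamma estimates sharp enough to certify the specific constants $1317$, $44$, and $25/24$.
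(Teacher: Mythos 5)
Your part~(a) argument and the structural analysis in part~(b) are sound and, after the substitution $u=1/p$, are essentially the paper's argument in disguise: showing $H(u)=\ln g(1/u)$ is strictly concave on $(0,1/2]$ is equivalent to showing that $k(p):=\Psi(1+\tfrac 3p)-\Psi(1+\tfrac 1p)-\tfrac 23\ln 2$ is strictly decreasing, which is precisely what the paper does; your termwise criterion $\sqrt 3\,u<1+n$ on the trigamma series is a clean way to see it, and it unifies the cases $p\ge 5$ and $p<5$ that the paper treats separately.

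There is, however, a genuine gap in the $[2,4]$ lower bound. You propose to deduce $g(p)\ge 1+\tfrac{p-2}{44}$ from a uniform bound $g'(p)\ge \tfrac 1{44}$ on $[2,4]$. This is false: since $g'(p)=\tfrac 32\,\tfrac{g(p)}{p^2}\,k(p)$ and $k(p_1)=0$ at $p_1\simeq 4.19$, the derivative $g'$ is already very small near $p=4$; indeed $k(4)\simeq 0.0128$ and $g(4)\simeq 1.046$ give $g'(4)\simeq \tfrac32\cdot\tfrac{1.046}{16}\cdot 0.0128\approx 0.0013$, far below $\tfrac 1{44}\approx 0.0227$. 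A tangent-type estimate at $p=2$ extended over all of $[2,4]$ therefore cannot work. The paper instead proves that $\tfrac{g(p)}{p^2}$ and $k(p)$ are both positive and decreasing on $[2,p_1)$, hence $g'$ is decreasing there, hence $g$ is concave on $[2,p_1]$; concavity then yields the \emph{secant}-line bound $g(p)\ge g(2)+\tfrac{g(4)-g(2)}{2}(p-2)=1+\tfrac{g(4)-1}{2}(p-2)>1+\tfrac{p-2}{44}$. Note also that concavity of $H$ in $u$ does not transfer to concavity of $g$ in $p$, so you cannot borrow it from your part~(b) computation; you need a separate argument for concavity of $g$ itself. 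Finally, for the $[5,p_0]$ bound, ``bounding $g''$'' is too weak as stated (over an interval of length $\simeq 21$, the accumulated error would swamp $|g'(p_0)|\simeq 1/1316$); the paper's explicit estimate $(\ln h)'(p)\le -1.04768/p^2$ combined with $g(p)\ge 1$ gives $g'(p)\le -\tfrac 12\cdot 1.04768/p_0^2<-\tfrac 1{1317}$ directly, which is the cleaner route.
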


\begin{proof}
(a) In terms of the Digamma function $\Psi := (\ln \Gamma)'$ we have
$$f'(p) = \frac{f(p)}{p^2} (\Psi(1+\frac 1 p) - 3 \Psi(1+\frac 3 p)) \ . $$
For $F(p) := \Psi(1+\frac 1 p) - 3 \Psi(1+\frac 3 p)$ one has $F'(p) = \frac 1 {p^2} (9 \Psi'(1+\frac 3 p) - \Psi'(1+\frac 1 p))$.
By Abramowitz, Stegun \cite{AS}, 6.3.16 and 6.4.10 for all $x >0$
\begin{equation}\label{eq3.1}
\Psi(1+x) = -\gamma + \sum_{n=1}^\infty \frac x {n(n+x)} \; , \; \Psi'(1+x) = \sum_{n=1}^\infty \frac 1 {(n+x)^2} \ ,
\end{equation}
where $\gamma \simeq 0.5772$ denotes the Euler constant. Therefore $\Psi'$ is decreasing, and we conclude for all $0 \le x \le 1$ that
$\frac{\pi^2} 6 -1 = \Psi'(2) \le \Psi'(1+x) \le \Psi'(1) = \frac{\pi^2} 6$.
Hence $F'(p) \ge \frac 1 {p^2} (\frac{4 \pi^2} 3 -9) > 0$ for all $p \ge 3$. Thus $F$ is increasing. Since $F(9) \simeq -0.012$, $F(10) \simeq 0.084$, $F$ has exactly one zero $p_1 \in(3,\infty)$, $p_1 \simeq 9.115$. Hence $f$ is decreasing in $(3,p_1)$ and increasing in $(p_1,\infty)$. For all $p \ge 3$, $f(p) \ge f(p_1) > 0.9429$. \\

(b) Let $h(p) := \frac { 2^{\frac 2 p} \Gamma(1+\frac 1 p)^3 }{ \Gamma(1+\frac 3 p) }$. Then
$$h'(p) = \frac{h(p)}{p^2} ( 3 \Psi(1+\frac 3 p) - 3 \Psi(1+\frac 1 p) - 2 \ln 2 ) \ . $$
By \eqref{eq3.1} and the geometric series we find for $p > 3$ \\
\begin{align*}
(\ln h)'(p) &= \frac{h'(p)}{h(p)} = \frac 1 {p^2} ( 3 \sum_{n=1}^\infty ( \frac{\frac 3 p}{n(n+\frac 3 p)} - \frac{\frac 1 p}{n(n+\frac 1 p)} ) - 2 \ln 2 ) \\
&= \frac 1 {p^2} ( 3 \sum_{n=1}^\infty \sum_{k=0}^\infty \frac{(-1)^k}{n^{k+2}} \frac{3^{k+1}-1}{p^{k+1}} - 2 \ln 2 ) \\
&= \frac 1 {p^2} ( 3 \sum_{k=0}^\infty (-1)^k \zeta(k+2) \frac{3^{k+1}-1}{p^{k+1}} - 2 \ln 2 ) \ .
\end{align*}
The sum is an alternating series with decreasing coefficients. We find that
\begin{equation}\label{eq3.2}
(\ln h)'(p) \le -\frac{2 \ln 2}{p^2} + \frac{\pi^2}{p^3} - \frac{24 \zeta(3)}{p^4} + \frac{13}{15} \frac{\pi^4}{p^5} - \frac{240 \zeta(5)} {p^6} + \frac{242}{315} \frac{\pi^6}{p^7} < 0
\end{equation}
holds for all $5 \le p < \infty$. Therefore $\ln h$, $h$ and $g(p) = \sqrt{\frac 3 \pi h(p)}$ are strictly decreasing in $[5,\infty)$. We have
$\lim_{p \to \infty} g(p) = \sqrt{\frac 3 \pi} < 1$, $g(26) \simeq 1.00020$, $g(27) \simeq 0.99945$: There is exactly one $p_0 \in [5,\infty)$ with $g(p_0)=1$, $p_0 \simeq 26.265$, and for $5 \le p < p_0$ we have $g(p) > 1$. Inequality \eqref{eq3.2} yields for $5 \le p \le p_0$ that
$(\ln h)'(p) \le - \frac{1.04768}{p^2}$. Hence for these $p$
$$g'(p) = g(p) (\ln g)'(p) = \frac 1 2 g(p) (\ln h)'(p) \le - \frac 1 2 \frac{1.04768}{p^2} \le - \frac 1 2 \frac{1.04768}{p_0^2} < - \frac 1 {1317} \ . $$
This implies $g(p) \ge 1 + \frac 1{1317} (p_0-p)$ for all $5 \le p \le p_0$. \\

To show $g(p) > 1$ also for $2 < p < 5$, note that $g(2)=1$ and
$$g'(p) = \frac 1 2 g(p) (\ln h)'(p) = \frac 3 2 \frac{g(p)}{p^2} ( \Psi(1+\frac 3 p) - \Psi(1+\frac 1 p) - \frac 2 3 \ln 2 ) \ . $$
Again by \eqref{eq3.1}
$$\Psi(1+\frac 3 p) - \Psi(1+\frac 1 p) = \sum_{n=1}^\infty (\frac{\frac 3 p}{n (n + \frac 3 p)} - \frac{\frac 1 p}{n (n + \frac 1 p)}) =
\sum_{n=1}^\infty \frac{2 p}{(np+1)(np+3)} \ . $$
All summands are decreasing in $p$. Thus $k(p) := \Psi(1+\frac 3 p) - \Psi(1+\frac 1 p) - \frac 2 3 \ln 2$ is strictly decreasing in $p$, with
$k(4) = \pi - \frac 8 3 - \frac 2 3 \ln 2 \simeq 0.0128 >0$ and $k(5) \simeq -0.0470 <0$. Thus $g'$ has exactly one zero $p_1 \in (2,\infty)$, $p_1 \simeq 4.193$, and $g$ is strictly increasing in $(2,p_1)$ and strictly decreasing in $(p_1,\infty)$. We know already that $g(5) > 1$ and hence $g(p)>1$ for all $2 < p \le 5$. For $p \in [4,5]$, $g(p) \ge \min(g(4),g(5)) = g(5) > \frac {25}{24}$. Further
$$(\frac{g(p)}{p^2})' = \frac 3 2 \frac{g(p)}{p^4} (k(p) - \frac 4 3 p) < 0 \ , $$
since $k(p)- \frac 4 3 p \le k(2)-\frac 8 3 = - (2+ \frac 2 3 \ln(2)) < 0$. Therefore $\frac{g(p)}{p^2}$ and $k(p)$ are both strictly decreasing and positive for $p \in [2,p_1)$, and with $g'(p) = \frac{g(p)}{p^2} k(p)$, $g'$ is decreasing and hence $g$ is concave in $[2,p_1]$. Therefore for $2 \le p \le 4$
$$g(p) \ge 1 + \frac{g(4)-1} 2 (p-2) > 1 + \frac{p-2}{44} \ , $$
which proves all lower estimates stated in Lemma \ref{lem1}.
\end{proof}

For $p \to \infty$, the functions $\gamma_p$ in \eqref{eq2.2} tend to $\gamma_\infty$, $\gamma_\infty(s) = \textup{sinc}(s)$. We estimate their difference for $p \ge 2$.

\begin{lemma}\label{lem2}
Let $2 \le p < \infty$. Then for all $s > 0$
$$ | \textup{sinc}(s) - \int_0^\infty \cos(s r) \exp(-r^p) dr | \le 0.3926 \ . $$
This implies $\gamma_p(s) > 0$ for all $0 \le s \le \frac 2 3 \pi$.
\end{lemma}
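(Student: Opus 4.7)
The plan is to exploit the identity $\textup{sinc}(s) = \int_0^1 \cos(sr)\,dr$: as $p \to \infty$, the weight $e^{-r^p}$ converges pointwise (and in $L^1$) to the indicator $\mathbf{1}_{[0,1]}(r)$, which turns $\int_0^\infty \cos(sr) e^{-r^p} dr$ into exactly $\textup{sinc}(s)$. This suggests pulling the difference inside, applying the triangle inequality, and using $|\cos(sr)| \le 1$ to obtain the $s$-independent bound
\[
\Bigl| \textup{sinc}(s) - \int_0^\infty \cos(sr) e^{-r^p}\,dr \Bigr| \;\le\; \int_0^\infty \bigl| \mathbf{1}_{[0,1]}(r) - e^{-r^p} \bigr|\,dr \;=\; \phi(p),
\]
where, splitting at $r=1$ and using $0<e^{-r^p}<1$,
\[
\phi(p) \;:=\; \int_0^1 (1-e^{-r^p})\,dr \;+\; \int_1^\infty e^{-r^p}\,dr \;=\; 1 - \int_0^1 e^{-r^p}\,dr + \int_1^\infty e^{-r^p}\,dr.
\]

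The crux of the argument is to show $\phi$ is monotone decreasing on $[2,\infty)$, so that $\phi(p) \le \phi(2)$ for all $p \ge 2$. Differentiating under the integral (justified by straightforward domination) gives
\[
\phi'(p) \;=\; \int_0^1 r^p (\ln r)\, e^{-r^p}\,dr \;-\; \int_1^\infty r^p (\ln r)\, e^{-r^p}\,dr.
\]
On $(0,1)$, $\ln r \le 0$, so the first integrand is $\le 0$; on $(1,\infty)$, $\ln r \ge 0$, so with the leading minus sign the second contribution is also $\le 0$. Hence $\phi'(p) \le 0$. It then remains to evaluate $\phi(2)$: using $\int_0^1 e^{-r^2} dr = \tfrac{\sqrt\pi}{2}\,\textup{erf}(1)$ and $\int_1^\infty e^{-r^2} dr = \tfrac{\sqrt\pi}{2}\,\textup{erfc}(1)$ yields $\phi(2) = 1 - \tfrac{\sqrt\pi}{2}\bigl(2\,\textup{erf}(1)-1\bigr) \simeq 0.3926$, which is exactly the advertised bound.

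For the final positivity assertion, note that $\textup{sinc}$ is decreasing on $[0,\pi]$ (from $(s\cos s-\sin s)' = -s\sin s \le 0$ together with $s\cos s - \sin s$ vanishing at $0$), hence for $s \in [0,\tfrac{2}{3}\pi]$,
\[
\textup{sinc}(s) \;\ge\; \textup{sinc}\!\left(\tfrac{2\pi}{3}\right) \;=\; \frac{3\sqrt 3}{4\pi} \;\simeq\; 0.4135.
\]
Combining with the main estimate, $\int_0^\infty \cos(sr) e^{-r^p}\,dr \ge \textup{sinc}(s) - 0.3926 > 0$, and since $\gamma_p(s)$ is a positive constant multiple of this integral, $\gamma_p(s) > 0$ throughout $[0,\tfrac{2}{3}\pi]$.

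The main obstacle is not really conceptual—the monotonicity of $\phi$ is immediate once written down—but rather \emph{sharpness of the constant}. The asserted $0.3926$ is just barely below $\textup{sinc}(2\pi/3) = 3\sqrt 3/(4\pi) \simeq 0.4135$, leaving only a margin of about $0.02$. Any cruder bound—for instance, controlling the two tails separately by $1 - \Gamma(1+1/p)$ alone, or symmetrizing to $\int_0^\infty |1 - e^{-r^p}|\,dr$—would be too lossy to produce a positive $\gamma_p$ on the entire interval $[0, 2\pi/3]$. So the key is to both use the sharp $L^1$-distance to $\mathbf{1}_{[0,1]}$ and recognize that the monotonicity forces the worst case to be $p=2$, where the closed-form evaluation in terms of $\textup{erf}(1)$ just squeaks in under $3\sqrt 3/(4\pi)$.
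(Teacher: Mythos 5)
Your proof is correct and follows the same decomposition as the paper: write $\textup{sinc}(s) = \int_0^1 \cos(sr)\,dr$, use $|\cos|\le 1$ to bound the difference by $\phi(p) := \int_0^1 (1-e^{-r^p})\,dr + \int_1^\infty e^{-r^p}\,dr$ (which, after noting $\int_0^\infty e^{-r^p}\,dr = \Gamma(1+\tfrac1p)$, is precisely the paper's quantity $(1-\Gamma(1+\tfrac1p)) + 2\int_1^\infty e^{-r^p}\,dr$), then argue $\phi$ is decreasing so that the worst case is $p=2$, and finally compare with $\textup{sinc}(2\pi/3) = \tfrac{3\sqrt 3}{4\pi}$. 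Where you diverge from the paper — and improve on it — is in proving $\phi'(p)<0$. The paper changes variables to $u=r^p$, differentiates the resulting expression involving $\Gamma(1+\tfrac1p)$, and then needs a digamma-function estimate together with the numerical facts $\int_1^\infty u^{-1/2}e^{-u}\,du \simeq 0.219$, $\Psi(1+\tfrac1p)<0$ for $p>\tfrac{13}{6}$, and $\Gamma(1+\tfrac1p)|\Psi(1+\tfrac1p)|<\tfrac1{100}$ for $2\le p\le\tfrac{13}{6}$. You instead differentiate $\phi$ directly under the integral sign and observe that $\tfrac{\partial}{\partial p}(1-e^{-r^p}) = r^p(\ln r)e^{-r^p} \le 0$ on $(0,1)$ and $\tfrac{\partial}{\partial p}e^{-r^p} = -r^p(\ln r)e^{-r^p} \le 0$ on $(1,\infty)$, so both pieces contribute negatively to $\phi'$ purely by the sign of $\ln r$. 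This dispenses with the digamma estimates entirely, and the justification by dominated convergence is routine (e.g. $|\ln r|$ on $(0,1)$, and $r^2(\ln r)e^{-r^2}$ on $(1,\infty)$ using that $xe^{-x}$ is decreasing for $x\ge 1$). Your monotonicity argument is thus both more elementary and more transparent. One small point: you should state that $\phi(2) = 1 - \tfrac{\sqrt\pi}{2}(2\,\textup{erf}(1)-1) \simeq 0.39258$ is strictly below $0.3926$, rather than merely that it is approximately $0.3926$, since the lemma asserts the inequality with that constant.
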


\begin{proof}
We have $\int_0^\infty \exp(-s^p) ds = \Gamma(1 + \frac 1 p) < 1$. Since $\textup{sinc}(s) = \int_0^1 \cos(sr) dr$, we find
\begin{align*}
| \textup{sinc} (s) & - \int_0^\infty \cos(s r) \exp(-r^p) dr | \\
& = | \int_0^1 \cos(s r) (1-\exp(-r^p)) dr - \int_1^\infty \cos(s r) \exp(-r^p) dr | \\
& \le (1 - \Gamma(1 + \frac 1 p)) + 2 \int_1^\infty \exp(-r^p) dr \\
& = (1 - \Gamma(1 + \frac 1 p)) + \frac 2 p \int_1^\infty u^{\frac 1 p -1} \exp(-u) du =: \phi(p) \ .
\end{align*}
Then $\phi' < 0$, since for $p \ge 2$
\begin{align*}
\phi'(p) & = - \frac 1 {p^2} ( 2 \int_1^\infty u^{\frac 1 p -1} (1+\frac{\ln(u)} p) \exp(-u) du - \Gamma(1 + \frac 1 p) \Psi(1+\frac 1 p) ) \\
&\le - \frac 1 {p^2} ( 2 \int_1^\infty u^{-\frac 1 2} \exp(-u) du - \frac 1 {100} ) < - \frac 2 5 \frac 1 {p^2} <0 \ ,
\end{align*}
using that $\int_1^\infty u^{-\frac 1 2} \exp(-u) du \simeq 0.219$ and $\Psi(1+\frac 1 p) < 0$ for all $p > \frac{13} 6$ and
$\Gamma(1+\frac 1 p) |\Psi(1+\frac 1 p)| < \frac 1 {100}$ for $2 \le p \le \frac{13} 6$. Therefore $\phi(p) \le \phi(2) < 0.3926$. \\

This yields for all $0 \le s \le \frac 2 3 \pi$ and $p \ge 2$
$$\Gamma(1+\frac 1 p) \gamma_p(s) = \int_0^\infty \cos(sr) \exp(-r^p) dr \ge \textup{sinc}(s) - 0.3926 \ge \frac{3 \sqrt 3}{4 \pi} -0.3926 > \frac 1 {50} > 0 \ . $$
\end{proof}

{\bf Remark}. The derivative of $p \phi(p)$ is increasing with
$$\lim_{p \to \infty} (p \phi(p))' = \gamma + 2 \int_1^\infty \frac 1 u \exp(-u) du \le 1.016 \ . $$
Thus for all $p \ge 1$, $|\textup{sinc}(s) - \int_0^\infty \cos(s r) \exp(-r^p) dr | \le \frac{1.016} p$. Actually, $\gamma_p(s) > 0$ for all $p \ge 1$ and $s \in [0, \pi]$. However, we do not need this. \\

\section{Proof of Theorem \ref{th1}}

{\bf Proof of Theorem \ref{th1}. } \\
(i) To estimate $A_{n,p}(a^{(n)})$ from below, we first find a lower bound for $\gamma_p(\frac s {\sqrt n})$ for all $s \le \frac 3 2 \sqrt n$. By the series representation for $\cos x$ we have for $0 \le x \le \frac 3 2$
$$\cos x - (1 - \frac {x^2} 2 + \frac {x^4} {26} ) = \frac{x^4}{312} - \frac{x^6}{720} + \sum_{k=4}^\infty (-1)^m \frac{x^{2m}}{(2m)!} > 0 \ , $$
$\cos x >0$, $1 - \frac {x^2} 2 + \frac {x^4} {26} > 0$. Therefore for $s \le \frac 3 2 \sqrt n$
$$\gamma_p(\frac s {\sqrt n}) \ge \frac 1 {\Gamma(1+\frac 1 p)} \Big[ \int_0^{\frac 3 2 \frac{\sqrt n} s} (1 - \frac {s^2 r^2} {2n} + \frac {s^4 r^4} {26 n^2}) \exp(-r^p) dr + \int_{\frac 3 2 \frac{\sqrt n} s}^\infty \cos(\frac{sr}{\sqrt n}) \exp(-r^p) dr \Big] \ . $$
To estimate this from below, write $\int_0^{\frac 3 2 \frac{\sqrt n} s} = \int_0^\infty - \int_{\frac 3 2 \frac{\sqrt n} s}^\infty$ and use that
$|\cos(x)| \le 1$,
\begin{align}\label{eq3.3}
\gamma_p(\frac s {\sqrt n}) &\ge \frac 1 {\Gamma(1+\frac 1 p)} [ \int_0^\infty (1 - \frac{s^2 r^2}{2n} + \frac{s^4 r^4}{26 n^2}) \exp(-r^p) dr - R ] \nonumber \\
& = \frac 1 {\Gamma(1+\frac 1 p)} [\Gamma(1+\frac 1 p) - \frac{\Gamma(1+\frac 3 p) s^2}{6 n} + \frac{\Gamma(1+\frac 5 p) s^4}{130 n^2} - R ] \ ,
\end{align}
where
\begin{align}\label{eq3.4}
R &:= \int_{\frac 3 2 \frac{\sqrt n} s}^\infty (2 - \frac {s^2 r^2} {2n} + \frac {s^4 r^4} {26 n^2}) \exp(-r^p) dr \nonumber \\
& = \frac 1 p \int_{(\frac 3 2 \frac{\sqrt n} s)^p}^\infty (2 u^{\frac 1 p - 1} - \frac{s^2} {2n} u^{\frac 3 p - 1} + \frac{s^4}{26 n^2} u^{\frac 5 p - 1}) \exp(-u) du \; ; \; u \ge 1 \ .
\end{align}

(ii) We first consider the case $p \ge 5$. Since $u \ge 1$ in the above integral, $u^{\frac 5 p -1} \le 1$, $2 u^{\frac 1 p -1} \le 2 (\frac 2 3 \frac s {\sqrt n})^{p-1}$ and $\int_{(\frac 3 2 \frac{\sqrt n} s)^p} \exp(-u) du = \exp(-\frac 3 2 \frac{\sqrt n} s)^p)$.
The remainder term $R$ will be smaller than the fourth order term $\frac{\Gamma(1+\frac 5 p) s^4}{130 n^2}$ provided that
$$\frac 1 {p \Gamma(1+\frac 5 p)} (2 (\frac 2 3 \frac s {\sqrt n})^{p-1} + \frac{s^4}{26 n^2}) \exp(-(\frac 3 2 \frac{\sqrt n} s)^p) < \frac{s^4}{130 n^2} \ . $$
For $s \le \frac 3 2 \sqrt n$ the left side is decreasing in $p$, and therefore this condition is the strongest for $p=5$. Writing $y:=\frac s {\sqrt n}$, it means
$$\frac 1 5 ( \frac{32}{81} y^4 + \frac 1 {26} y^4) \exp(-(\frac 3 2 \frac 1 y)^5) < \frac 1 {130} y^4 $$
or $\frac{913}{81} < \exp((\frac 3 2 \frac 1 y)^5)$, $y < \frac 3 2 \frac 1 {\ln(\frac{913}{81})^(1/5)} \simeq 1.2567$.
Choosing $s \le \frac 7 6 \sqrt n$, $R \le \frac{\Gamma(1+\frac 5 p) s^4}{130 n^2}$ is satisfied and therefore
$$\gamma_p(\frac s {\sqrt n}) \ge 1 - c \frac {s^2} n \quad , \quad  c:= \frac 1 6 \frac{\Gamma(1+\frac 3 p)}{\Gamma(1+\frac 1 p)} \ . $$
By the proof of Lemma \ref{lem1} (a), $\frac{\Gamma(1+\frac 3 p)}{\Gamma(1+\frac 1 p)}$ is decreasing for $5 \le p \le p_1 \simeq 9.115$ and increasing for $p > p_1$. Its value at $p_0$ is less than the one at $5$, so that $c \le 0.1622 < \frac 8 {49}$, for its value at $p=5$. Then for $s \le \frac 7 6 \sqrt n$,
$x := c \frac{s^2} n \le \frac 8 {49} \frac{49}{36} = \frac 2 9$. We have
$$\ln(1-x) = - \sum_{j=1}^\infty \frac{x^j} j \ge -x - \frac 1 2 x^2 \sum_{k=0}^\infty x^k = -x - \frac 1 2 \frac{x^2}{1-x} \ge -x-\frac 9 {14} x^2 \ , $$
and hence
$$\gamma_p(\frac s {\sqrt n})^n \ge (1 - c \frac{s^2} n)^n \ge \exp(-c s^2 - \frac 9 {14} c^2 \frac{s^4} n) \ge \exp(-c s^2) (1 - \frac 9 {14} c^2 \frac{s^4} n) \ . $$
By Lemma \ref{lem1} (a),  $\frac{\Gamma(1+\frac 3 p)}{\Gamma(1+\frac 1 p)} \ge 0.9429$ and hence $c \ge 0.1571$ and for $s \le \frac 7 6 \sqrt n$
\begin{align*}
\int_0^{\frac 7 6 \sqrt n} & \gamma_p(\frac s {\sqrt n})^n ds \ge \int_0^{\frac 7 6 \sqrt n} \exp(-c s^2) (1 - \frac 9 {14} c^2 \frac{s^4} n ) ds  \\
& = \int_0^\infty \exp(-c s^2) (1 - \frac 9 {14} c^2 \frac{s^4} n ) ds - \int_{\frac 7 6 \sqrt n}^\infty \exp(-c s^2) (1 - \frac 9 {14} c^2 \frac{s^4} n ) ds \end{align*}
For $s \ge \frac 7 6 \sqrt n$ and $c \ge 0.1571$ we have $1 - \frac 9 {14} c^2 \frac{s^4} n <0$ for all $n \ge 35$. Actually, evaluating the last integral in terms of the error function shows that the integral is already negative for $n \ge 24$. Thus for $n \ge 24$
\begin{align*}
\int_0^{\frac 7 6 \sqrt n} \gamma_p(\frac s {\sqrt n})^n ds& \ge \int_0^\infty \exp(-c s^2) (1 - \frac 9 {14} c^2 \frac{s^4} n ) ds  \\
& = \frac 1 2 \sqrt{\frac \pi c} (1 - \frac {27}{56} \frac 1 n) = \sqrt{\frac{3 \pi} 2} \sqrt{\frac{\Gamma(1+\frac 1 p)}{\Gamma(1+\frac 3 p)}} (1 - \frac {27}{56} \frac 1 n) \ ,
\end{align*}
where we used that $\int_0^\infty \exp(-c s^2) ds = \frac 1 2 \sqrt{\frac \pi c}$ and $\int_0^\infty \exp(-c s^2) c^2 s^4 ds = \frac 3 8 \sqrt{\frac \pi c}$.
By Lemma \ref{lem2} we have $\gamma_p(s) > 0$ for all $0 \le s \le 2$. Hence
$$0 < \sqrt n \int_{\frac 7 6}^2 \gamma_p(s)^n du = \int_{\frac 7 6 \sqrt n}^{2 \sqrt n} \gamma_p(\frac s {\sqrt n})^n ds  \ . $$
By the proof of Proposition \ref{prop1}
\begin{align*}
|\gamma_p(s)| & = |\frac p {\Gamma(1+\frac 1 p)} \int_0^\infty \textup{sinc}(sr) r^p \exp(-r^p) dr| \\
& = |\frac 1 {s \Gamma(1+\frac 1 p)} \int_0^\infty \sin(sr) \ p r^{p-1} \exp(-r^p) dr| \ , \ |\sin(sr)| \le 1  \\
& \le \frac 1 {s \Gamma(1+\frac 1 p)} \int_0^\infty \exp(-u) du = \frac 1 {s \Gamma(1+\frac 1 p)} \ .
\end{align*}
This yields the tail estimate for $p \ge 5$
\begin{align*}
\int_{2 \sqrt n}^\infty |\gamma_p(\frac s {\sqrt n})|^n ds &= \sqrt n \int_2^\infty |\gamma_p(s)|^n ds  \\
& \le \frac{\sqrt n}{\Gamma(1+\frac 1 p)^n} \int_2^\infty s^{-n} ds = \frac{2 \sqrt n}{n-1} (\frac 1 {2 \Gamma(1+\frac 1 p)})^n < \frac{2 \sqrt n}{n-1} 0.5446^n \ .
\end{align*}
We conclude for $p \ge 5$ and $n \ge 24$ that
\begin{align*}
\int_0^\infty \gamma_p(\frac s {\sqrt n})^n ds & \ge \int_0^{\frac 7 6 \sqrt n} \gamma_p(\frac s {\sqrt n})^n ds - \int_{2 \sqrt n}^\infty |\gamma_p(\frac s {\sqrt n})|^n ds \\
& \ge \sqrt{\frac{3 \pi} 2} \sqrt{\frac{\Gamma(1+\frac 1 p)}{\Gamma(1+\frac 3 p)}} (1 - \frac {27}{56} \frac 1 n) - \frac{2 \sqrt n}{n-1} 0.5446^n \\
& \ge \sqrt{\frac{3 \pi} 2} \sqrt{\frac{\Gamma(1+\frac 1 p)}{\Gamma(1+\frac 3 p)}} (1 - \frac {27}{56} \frac 1 n - \frac{\sqrt n}{n-1} 0.5446^n) \ ,
\end{align*}
using $\sqrt{\frac{3 \pi} 2} \sqrt{\frac{\Gamma(1+\frac 1 p)}{\Gamma(1+\frac 3 p)}} > 2$. For $n \ge 24$, $\frac{\sqrt n}{n-1} 0.5446^n < \frac{10^{-5}} n$ and $\frac {27}{56} + 10^{-5} < \frac{193}{400}$, so that
\begin{align*}
A_{n,p}(a^{(n)}) & = \Gamma(1+\frac 1 p) \frac 2 \pi \int_0^\infty \gamma_p(\frac s {\sqrt n})^n ds \\
&\ge \sqrt{\frac 6 \pi} \sqrt{\frac{\Gamma(1+\frac 1 p)^3}{\Gamma(1+\frac 1 p)}} (1 - \frac{193}{400} \frac 1 n) \ .
\end{align*}
This is  $> A_{n,p}(a^{(2)}) = 2^{\frac 1 2 - \frac 1 p}$, provided that
$$\frac{A_{n,p}(a^{(n)})} {A_{n,p}(a^{(2)})} \ge \sqrt{\frac 3 \pi} \sqrt{\frac{2^{\frac 2 p} \Gamma(1+\frac 1 p)^3}{\Gamma(1+\frac3 p)}} (1 - \frac{193}{400} \frac 1 n) > 1 \ . $$
By Lemma \ref{lem1} (b), the quotient $g(p) := \sqrt{\frac 3 \pi} \sqrt{\frac{2^{\frac 2 p} \Gamma(1+\frac 1 p)^3}{\Gamma(1+\frac3 p)}}$ is $> 1$ for all $2 < p < p_0 \simeq 26.265$, with $g(p) \ge 1 + \frac 1 {1317} (p_0-p)$ for all $5 \le p \le p_0$. We find for $p \ge 5$ and $n \ge 24$,
$$\frac{A_{n,p}(a^{(n)})} {A_{n,p}(a^{(2)})} \ge (1 + \frac 1 {1317} (p_0-p)) (1-\frac{193}{400} \frac 1 n)  \ . $$
This is $>1$ provided that $5 \le p \le p_0$ and $n \ge \frac{650}{p_0-p}$; $n \ge 24$ being automatically satisfied. \\

(iii) Secondly we consider the case $2 < p \le 5$. To estimate the remainder $R$ in \eqref{eq3.4}, we use that in this case $u^{\frac 5 p -1} \le u^{\frac 3 2}$. For $x > 0$
\begin{align*}
\int_x^\infty u^{\frac 3 2} \exp(-u) du & \le \Big( (\int_x^\infty u \exp(-u) du ) (\int_x^\infty u^2 \exp(-u) du ) \Big)^{\frac 1 2} \\
& = \Big( (1+x) (2+2 x +x^2) \Big)^{\frac 1 2} \exp(-x) \ ,
\end{align*}
which is $ \le \frac {13}{20} x^2 \exp(-x)$ for all $x \ge \frac 9 2$. Now choose $s \le \sqrt{\frac n 2}$ for $2 \le p \le 5$. Then $x := (\frac 3 2 \frac{\sqrt n} s)^p \ge (\frac 3 {\sqrt 2})^p \ge \frac 9 2$ and
$$ R \le \frac 1 p ( 2 (\frac 2 3 \frac s {\sqrt n})^{p-1} + \frac{s^4}{26 n^2} \frac {13}{20} (\frac 3 2 \frac {\sqrt n} s)^{2p} ) \exp(-(\frac 3 2 \frac{\sqrt n} s)^p ) \ . $$
Again we want this to be smaller than the fourth order term $\Gamma(1+\frac 5 p) \frac{s^4}{130 n^2}$, a condition which is strongest for $p=2$. We then require for $y := \frac s {\sqrt n}$
$$\frac 1 2(\frac 4 3 y + \frac{81}{640}) \exp(-(\frac 3 2 \frac 1 y)^2) < \Gamma(\frac 7 2) \frac {y^4}{130 n^2} \ , $$
which is satisfied for all $y \le 0.7161$, and in particular for our choice $y = \frac s {\sqrt n} \le \frac 1 {\sqrt 2}$. Therefore for $s \le \sqrt{\frac n 2}$, as in part (ii),
$$\gamma_p(\frac s {\sqrt n}) \ge 1 - c \frac{s^2} n \quad , \quad c:= \frac 1 6 \frac{\Gamma(1+\frac 3 p)}{\Gamma(1+\frac 1 p)} \ . $$
We have $c \le \frac 1 4$ for $2 \le p \le 5$ and $x:= c \frac{s^2} n \le \frac 1 8$. Similarly as in (ii), \\
$\ln(1-x) \ge -x-\frac{x^2}{1-x} \ge -x-\frac 4 7 x^2$ and
\begin{align*}
\gamma_p(\frac s {\sqrt n}) & \ge (1-c \frac{s^2} n)^n =\exp(n \ln (1-c \frac{s^2} n)) \\
& \ge \exp(-c s^2 - \frac 4 7 c^2 \frac{s^4} n) \ge \exp(-c s^2) (1- \frac 4 7 c^2 \frac{s^4} n)
\end{align*}
and
\begin{align*}
\int_0^{\sqrt{\frac n 2}} & \gamma_p(\frac s {\sqrt n})^n ds \ge \int_0^{\sqrt{\frac n 2}} \exp(-c s^2) (1 - \frac 4 7 c^2 \frac{s^4} n ) ds  \\
& \ge \int_0^\infty \exp(-c s^2) (1 - \frac 4 7 c^2 \frac{s^4} n ) ds - \int_{\sqrt{\frac n 2}}^\infty \exp(-c s^2) ds \\
& = \frac 1 2 \sqrt{\frac \pi c} (1 - \frac 3 7 \frac 1 n) - \int_{\sqrt{\frac n 2}}^\infty \exp(-c s^2) ds \ .
\end{align*}
To estimate the error term, note that $c \ge .16219$ -its value for $p=5$- and
\begin{align*}
\int_{\sqrt{\frac n 2}}^\infty \exp(-c s^2) ds & = \frac 1 {2 \sqrt c} \int_{\frac c 2 n}^\infty \frac 1 {\sqrt u} \exp(-u) du \\
& \le \frac 1 {c \sqrt{2 n}} \int_{\frac c 2 n}^\infty \exp(-u) du = \frac 1 {c \sqrt{2 n}} \exp(-\frac c 2 n) \le \frac{4.36}{\sqrt n} 0.9222^n \ .
\end{align*}
Again $\int_{\sqrt{\frac n 2}}^{2 \sqrt n} \gamma_p(\frac s {\sqrt n})^n ds \ge 0$, since by Lemma \ref{lem2} $\gamma_p(x) >0$ for all $0 \le x \le 2$ and, as in (ii), $\int_{2 \sqrt n}^\infty |\gamma_p(\frac s {\sqrt n})|^n ds \le \frac {2 \sqrt n}{n-1} 0.5446^n$, so that for $2 \le p \le 5$
$$\int_0^\infty \gamma_p(\frac s {\sqrt n})^n ds \ge  \frac 1 2 \sqrt{\frac \pi c} (1 - \frac 3 7 \frac 1 n) -(\frac{4.36}{\sqrt n} 0.9222^n + \frac {2 \sqrt n}{n-1} 0.5446^n) \ . $$
Since $c \le \frac 1 4$, $\frac 1 2 \sqrt{\frac \pi c} \ge \sqrt{\pi}$. Further for $n \ge 33$
$$\frac 3 7 + \frac 1 {\sqrt \pi} (4.36 \sqrt n \ 0.9222^n + \frac {2 n \sqrt n}{n-1} 0.5446^n) \le 1.405 \ , $$
so that $\int_0^\infty \gamma_p(\frac s {\sqrt n})^n ds \ge \frac 1 2 \sqrt{\frac \pi c} (1- \frac{1.405} n)$. For $p \in [2,4]$, using Lemma \ref{lem1} (b),
\begin{align*}
\frac{A_{n,p}(a^{(n)})} {A_{n,p}(a^{(2)})} &\ge \sqrt{\frac 3 \pi} \sqrt{\frac{2^{\frac 2 p} \Gamma(1+\frac 1 p)^3}{\Gamma(1+\frac3 p)}} (1 - \frac {1.405} n) \\
& \ge (1 + \frac{p-2}{44}) (1 - \frac{1.405} n) \ .
\end{align*}
For $n \ge \frac{65}{p-2}$ this is $>1$, with $n \ge 33$ being automatically satisfied. For $p \in[4,5]$, again by Lemma \ref{lem1} (b),
$\sqrt{\frac 3 \pi} \sqrt{\frac{2^{\frac 2 p} \Gamma(1+\frac 1 p)^3}{\Gamma(1+\frac3 p)}} > \frac{25}{24}$ and $\frac{A_{n,p}(a^{(n)})} {A_{n,p}(a^{(2)})} > 1$ is satisfied for $n \ge \frac{65}{p-2}$, too.
\hfill $\Box$  \\

\section{Prerequisites for the proof of Theorem \ref{th2}}

We need two lemmas for the proof of Theorem \ref{th2}.

\begin{lemma}\label{lem3}
(a) For $q \in [1,2]$, let $f(q) := \frac{\Gamma(2-\frac 1 q)}{\Gamma(\frac 1 q)}$. Then $f$ is decreasing, with $f(1)=1$, $f(2)=\frac 1 2$ and $f(\frac 4 3) \le 0.7397$. \\
(b) For $q \in [1,2]$, let $g(q) := \sqrt{\frac{2^{\frac 2 q}}\pi \Gamma(\frac 1 q) \Gamma(2-\frac 1 q)}$. Then $g'$ has exactly one zero in $q_1 \in (1,2)$, $q_1 \simeq 1.612$, and $g$ is strictly decreasing in $[1,q_1)$ and strictly increasing in $(q_1,2]$, with $g(\frac 4 3) = g(2) =1$. For $q \in (\frac 4 3,2)$, we have
$$g(q) \le 1 - M (\frac 1 q - \frac 1 2) (\frac 3 4 - \frac 1 q) \quad , \quad  M= 0.86326 \ . $$
\end{lemma}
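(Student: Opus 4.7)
\emph{Part (a).} The natural move is to compute the logarithmic derivative:
\[
\frac{f'(q)}{f(q)}=\frac{1}{q^2}\bigl[\Psi(2-\tfrac{1}{q})+\Psi(\tfrac{1}{q})\bigr],
\]
where $\Psi:=(\ln\Gamma)'$. It thus suffices to show $H(x):=\Psi(x)+\Psi(2-x)<0$ on $x\in[\tfrac{1}{2},1]$. By the series representation \eqref{eq3.1}, $\Psi''(1+y)=-2\sum_{n\ge 1}(n+y)^{-3}<0$, so $\Psi'$ is strictly decreasing on $(0,\infty)$. Since $x\le 2-x$ for $x\le 1$, $H'(x)=\Psi'(x)-\Psi'(2-x)\ge 0$, so $H$ is maximised at $x=1$ with $H(1)=2\Psi(1)=-2\gamma<0$. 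Thus $f$ is strictly decreasing on $[1,2]$. The values $f(1)=1$ and $f(2)=\Gamma(\tfrac{3}{2})/\Gamma(\tfrac{1}{2})=\tfrac{1}{2}$ are immediate, and $f(\tfrac{4}{3})=\Gamma(\tfrac{5}{4})/\Gamma(\tfrac{3}{4})\simeq 0.7397$ by direct numerical evaluation.

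\emph{Part (b).} The first step is to rewrite $g$ in closed form: reflection $\Gamma(\tfrac{1}{q})\Gamma(1-\tfrac{1}{q})=\pi/\sin(\pi/q)$ together with $\Gamma(2-\tfrac{1}{q})=(1-\tfrac{1}{q})\Gamma(1-\tfrac{1}{q})$ gives
\[
g(q)^2=\frac{4^{1/q}(q-1)}{q\sin(\pi/q)},
\]
from which $g(\tfrac{4}{3})=g(2)=1$ follow by direct substitution. For the monotonicity,
\[
\frac{g'(q)}{g(q)}=\frac{k(q)}{2q^2},\qquad k(q):=\Psi(2-\tfrac{1}{q})-\Psi(\tfrac{1}{q})-2\ln 2,
\]
and $k'(q)=[\Psi'(2-\tfrac{1}{q})+\Psi'(\tfrac{1}{q})]/q^2>0$, so $k$ is strictly increasing with $k(1)=-2\ln 2<0$ and $k(2)=2-2\ln 2>0$. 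Hence $k$ has a unique zero $q_1\in(1,2)$, numerically $q_1\simeq 1.612$, giving the stated shape of $g$.

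For the quadratic bound I substitute $x:=1/q\in[\tfrac{1}{2},\tfrac{3}{4}]$ and work with
\[
F(x):=g(1/x)^2=\frac{4^x(1-x)}{\sin(\pi x)},\qquad r(x):=(x-\tfrac{1}{2})(\tfrac{3}{4}-x).
\]
The plan has three steps. (i) Reduce the target to $F(x)\le 1-2M\,r(x)$, since then $g(1/x)=\sqrt{F(x)}\le\sqrt{1-2Mr(x)}\le 1-Mr(x)$ (the second inequality being $(1-Mr)^2\ge 1-2Mr$). (ii) Prove this reduced bound by concavity: set $E(x):=1-2Mr(x)-F(x)$; then $E(\tfrac{1}{2})=E(\tfrac{3}{4})=0$, and $r''=-2$ gives $E''(x)=4M-F''(x)$. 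Thus $F''\ge 4M$ on $[\tfrac{1}{2},\tfrac{3}{4}]$ makes $E$ concave with zero endpoint values, hence $E\ge 0$. (iii) Establish $F''\ge 4M$: with $\psi(x):=\ln F(x)=x\ln 4+\ln(1-x)-\ln\sin(\pi x)$,
\[
F''=F[(\psi')^2+\psi'']\ge F\psi'',\qquad \psi''(x)=\frac{\pi^2}{\sin^2(\pi x)}-\frac{1}{(1-x)^2}.
\]
Using $F'=F\psi'$ one computes $(F\psi'')'=F(\psi'\psi''+\psi''')$ with $\psi'''(x)=-2/(1-x)^3-2\pi^3\cos(\pi x)/\sin^3(\pi x)$. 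I would show $\psi'\psi''+\psi'''\le 0$ on $[\tfrac{1}{2},\tfrac{3}{4}]$, so that $F\psi''$ is non-increasing and attains its minimum at $x=\tfrac{3}{4}$, where $F(\tfrac{3}{4})\psi''(\tfrac{3}{4})=2\pi^2-16\simeq 3.7392\ge 4M=3.4530$ (equivalently $M\le(\pi^2-8)/2\simeq 0.9348$), completing the proof.

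The \emph{main obstacle} is the tight but elementary inequality $\psi'\psi''+\psi'''\le 0$ on $[\tfrac{1}{2},\tfrac{3}{4}]$. I would verify it by splitting at the interior zero $x_0=1/q_1\simeq 0.620$ of $\psi'$, on each side of which $\psi'$ has constant sign and can be bounded explicitly. On this interval one also has $\psi'''<0$ (the term $-2/(1-x)^3$ dominates the non-negative contribution $-2\pi^3\cos(\pi x)/\sin^3(\pi x)$, positive since $\cos(\pi x)\le 0$), giving clear margin for direct pointwise estimation against $\psi'\psi''$, whose modulus is small thanks to $|\psi'|$ being bounded by about $0.6$ throughout.
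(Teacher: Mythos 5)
Part (a) and the monotonicity statements in part (b) follow the same route as the paper (logarithmic derivative, digamma series, a single sign change of $k(q)=\Psi(2-\frac 1q)-\Psi(\frac 1q)-2\ln 2$), with only a cosmetic difference in part (a): you show $H(x)=\Psi(x)+\Psi(2-x)$ is maximised at $x=1$, while the paper simply bounds each summand by its value at the right endpoint. For the quadratic upper bound on $g$, however, your route is genuinely different and interesting. The paper works directly with $\ln\Gamma$: it applies strong log-convexity of $\Gamma$ with an explicit $\Psi'$-remainder at the two pairs of points $\{\frac 12,\frac 34\}$ and $\{\frac 54,\frac 32\}$, exponentiates to get $g(q)\le\exp(-c\varepsilon)$ with $c=\frac{\Psi'(3/4)+\Psi'(3/2)}4$ and $\varepsilon=(\frac 1q-\frac 12)(\frac 34-\frac 1q)$, and then Taylor-expands the exponential. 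You instead use the reflection formula to collapse $g(1/x)^2$ into the elementary closed form $F(x)=\frac{4^x(1-x)}{\sin(\pi x)}$, and deduce the bound from $F''\ge 4M$ together with concavity of $1-2Mr-F$ at the two zero endpoints $x=\frac 12,\frac 34$. That is a clean and elementary alternative which avoids digamma estimates entirely. The one point where your proposal is not yet a proof is the inequality $\psi'\psi''+\psi'''\le 0$ on $[\frac 12,\frac 34]$, which you rightly flag as the main obstacle; I checked it numerically and it does hold with comfortable margin away from $x=\frac 34$ and a margin of about $2$ near $x=\frac 34$, so the sign-splitting strategy you outline (using $\psi'\le 0$ on $[\frac 12,x_1]$ and explicit bounds for $|\psi'|$, $\psi''$, $|\psi'''|$ on $[x_1,\frac 34]$) should indeed close it, but this still needs to be carried out. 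Note also that your method, if pushed, gives the slightly better constant $M\le(\pi^2-8)/2\simeq 0.935$, whereas the paper's chain $c-\frac{c^2}{128}$ yields $0.8633$; both suffice for the stated $M=0.86326$.
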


\begin{proof}
(a) Differentiation gives $f'(q) = \frac{f(q)}{q^2} (\Psi(\frac 1 q) + \Psi(2-\frac 1 q))$. Since $\Gamma$ is logarithmic convex,$\Psi$ is increasing. Hence $\Psi(\frac 1 q) \le \Psi(1) = - \gamma$ and $\Psi(2-\frac 1 q) \le \Psi(\frac 3 2) = 2 (1- \ln(2)) - \gamma$, $\Psi(\frac 1 q)+\Psi(2-\frac 1 q) \le -2(\gamma + \ln(2) -1) < 0$. Therefore $f$ is decreasing in $[1,2]$. Moreover, $f(\frac 4 3) \le 0.7397$. \\

(b) For $g$ we find $(\ln(g))'(q)  = \frac 1 {2 q^2} (\Psi(2-\frac 1 q) - \Psi(\frac 1 q) - 2 \ln(2))$. The function $h(q):=\Psi(2-\frac 1 q) - \Psi(\frac 1 q) - 2 \ln(2)$ is strictly increasing, since with $\frac 1 q + \frac 1 p =1$ we have, using \eqref{eq3.1},
$$h'(q) = \frac 1 {q^2} (\Psi'(2-\frac 1 q) + \Psi'(\frac 1 q)) = \frac 1 {q^2} \sum_{n=1}^\infty (\frac 1 {(n+\frac 1 p)^2} + \frac 1 {(n-\frac 1 p)^2}) > 0 \ .$$
Moreover $h(1) = - 2 \ \ln(2) < 0$, $h(2) = 2 (1 - \ln(2)) > 0$. Thus $h$ has exactly one zero $q_1 \in (1,2)$, $q_1 \simeq 1.612$. We get that $g$ is decreasing in $[1,q_1)$ and increasing in $(q_1,2]$. We have $g(1) = \frac 2 {\sqrt \pi} > 1$, $g(\frac 4 3) = g(2) = 1$ and $g(q) < 1$ for $q \in (\frac 4 3 , 2)$. \\

For $\frac 4 3 < q < 2$, choose $\theta \in (0,1)$ with $\frac 1 q = (1-\theta) \frac 1 2 + \theta \frac 3 4$, $\theta = \frac 4 q -2$, $1-\theta = 3 - \frac 4 q$. Since $\Gamma$ is logarithmic convex, $F:= \ln \Gamma$ satisfies $F'' = \Psi' > 0$ and for some $\eta \in (\frac 1 2, \frac 3 4)$
$$F(\frac 1 q) \le (1-\theta) F(\frac 1 2) + \theta F(\frac 3 4) - \frac{\Psi'(\eta)} 2 (\frac 1 q - \frac 1 2) (\frac 3 4 - \frac 1 q) \ . $$
Since by \eqref{eq3.1} $\Psi'$ is decreasing, $\min_{\eta \in [\frac 1 2, \frac 3 4]} \Psi'(\eta) = \Psi'(\frac 3 4)$ and
$$F(\frac 1 q) \le (3 - \frac 4 q) F(\frac 1 2) + (\frac 4 q -2) F(\frac 3 4) - \frac{\Psi'(\frac 3 4)} 2 (\frac 1 q - \frac 1 2) (\frac 3 4 - \frac 1 q) \ . $$
Similarly, for $\frac 5 4 < 2 - \frac 1 q < \frac 3 2$, choose $\theta \in (0,1)$ with $2 - \frac 1 q = (1-\theta) {\frac 5 4} + \theta {\frac 3 2}$, $\theta = 3 - \frac 4 q$, $1 - \theta = \frac 4 q -2$, such that with  $\min_{\eta \in [\frac 5 4, \frac 3 2]} \Psi'(\eta) = \Psi'(\frac 3 2)$
$$F(2 - \frac 1 q) \le (\frac 4 q-2) F(\frac 5 4) + (3 - \frac 4 q) F(\frac 3 2) - \frac{\Psi'(\frac 3 2)} 2 (\frac 1 q - \frac 1 2) (\frac 3 4 - \frac 1 q) \ . $$
This yields after exponentiation with $c:=\frac{\Psi'(\frac 3 4)+\Psi'(\frac 3 2)} 4$
$$\Gamma(\frac 1 q) \Gamma(2-\frac 1 q) \le (\Gamma(\frac 1 2) \Gamma(\frac 3 2))^{3-\frac 4 q} (\Gamma(\frac 3 4) \Gamma(\frac 5 4))^{\frac 4 q -2}
\exp(- 2 c (\frac 1 q - \frac 1 2) (\frac 3 4 - \frac 1 q)) \ . $$
Clearly $\Gamma(\frac 1 2) \Gamma(\frac 3 2) = \frac \pi 2$, and by the complement formula for the $\Gamma$-function, cf. Abramowitz, Stegun \cite{AS},
$\Gamma(\frac 3 4) \Gamma(\frac 5 4) = \frac \pi {4 \sin(\frac \pi 4)} = \frac \pi {2 \sqrt 2}$, so that
$$g(q) = \sqrt{\frac{2^{\frac 2 q}}\pi \Gamma(\frac 1 q) \Gamma(2-\frac 1 q)} \le \exp(- c (\frac 1 q - \frac 1 2) (\frac 3 4 - \frac 1 q)) =: k(q) \ . $$
Let $\varepsilon := (\frac 1 q - \frac 1 2) (\frac 3 4 - \frac 1 q)$. Then $\varepsilon \le \frac 1 {64}$. Numerical evaluation yields $c \ge 0.86917$. By Taylor expansion
$k(q) \le 1 - c \varepsilon + \frac{c^2 \varepsilon^2} 2 = 1 - c (1-\frac{c \varepsilon} 2) \varepsilon \le 1 - d \varepsilon$, $d := c - \frac{c^2}{128} \ge 0.86326$ =: M. Therefore $g(q) \le 1 - M (\frac 1 q - \frac 1 2) (\frac 3 4 - \frac 1 q)$.
\end{proof}

\begin{lemma}\label{lem4}
For all $\frac 4 3 \le q \le 2$ and $0 \le s \le \frac {16} 5$, the functions $\delta_q$ of \eqref{eq2.6} satisfy $\delta_{\frac 4 3}(s) \le \delta_q(s) \le \delta_2(s) = \exp(-\frac{s^2} 4)$. Further $\delta_{\frac 4 3}(\frac{48}{25}) > 0$ and $\delta_{\frac 4 3}(\frac{16} 5) > - 0.588$.
\end{lemma}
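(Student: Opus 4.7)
The plan is to deduce both inequalities $\delta_{4/3}(s) \le \delta_q(s) \le \delta_2(s)$ from a single monotonicity claim: for each fixed $s \in [0, 16/5]$ the map $q \mapsto \delta_q(s)$ is non-decreasing on $[4/3, 2]$. After the substitution $u = r^p$ in (\ref{eq2.6}), the defining integral becomes
\begin{equation*}
\delta_q(s) = \frac{1}{\Gamma(1/q)} \int_0^\infty \cos(s u^{1-1/q}) \, u^{1/q-1} e^{-u} \, du,
\end{equation*}
so with $a := 1/q \in [1/2, 3/4]$ I would differentiate under the integral sign and show $\partial_a \delta \le 0$. The derivative splits into a $\sin$-weighted term (from differentiating $\cos$) and a $\log u$-weighted term (from the $u^{a-1}$ and $\Gamma(a)^{-1}$ factors); the signs are controlled by partitioning $u \in (0,1) \cup (1,\infty)$ so that $\log u$ has a definite sign on each piece.

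Equivalently, one may use the moment expansion
\begin{equation*}
\delta_q(s) = \sum_{k=0}^\infty \frac{(-s^2)^k}{(2k)!} \, \frac{\Gamma(2k/p + 1/q)}{\Gamma(1/q)}
\end{equation*}
obtained by expanding $\cos$ in (\ref{eq2.6}). Log-convexity of $\ln \Gamma$ implies that each coefficient $\Gamma(2k(1-a) + a)/\Gamma(a)$ is non-increasing in $a$ on $[1/2, 3/4]$ for every $k$. Once the absolute values of the series terms are monotone in $k$ (which occurs at $k$ somewhat larger than $s^2/2$), consecutive alternating-sign pairs group into blocks of a definite sign, giving the desired $q$-monotonicity after a finite check of the initial terms.

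For the pointwise estimates at $s = 48/25$ and $s = 16/5$, specializing to $p = 4$ gives
\begin{equation*}
\delta_{4/3}(s) = \sum_{k=0}^\infty \frac{(-s^2)^k \, \Gamma((2k+3)/4)}{(2k)! \, \Gamma(3/4)}.
\end{equation*}
Here $s^2 \simeq 3.686$ at $s = 48/25$ and $s^2 = 10.24$ at $s = 16/5$. In both cases I would truncate past the index where the terms begin to decrease in modulus, bound the tail by the first omitted term via the Leibniz criterion, and verify the bounds $>0$ and $>-0.588$ respectively by direct arithmetic; the second requires pushing the truncation further since the terms only start shrinking at $k \gtrsim 5$.

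The main obstacle is uniform control of the monotonicity near $s = 16/5$, where $\delta_{4/3}$ is close to zero (and on the verge of becoming negative), so both the alternating-series and derivative-sign arguments become tight. A useful auxiliary tool is the identity $\delta_q'(s) = -f(q)\, s\, \gamma_p(s)$ from (\ref{eq2.7}): combined with positivity of $\gamma_p$ on $[0, \pi]$ (Lemma \ref{lem2} and the subsequent remark), this lets one upgrade endpoint $q$-comparisons to interval ones on $s \in [0, \pi]$ by integration, leaving only the short interval $s \in [\pi, 16/5]$ to be handled by cruder means (for instance using the moment expansion truncated at high order, together with the pointwise values of $\delta_{4/3}$ already established).
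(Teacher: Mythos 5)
Your proposal follows the same general route as the paper---expand $\delta_q$ in its moment/Taylor series in $s$, try to establish $q$-monotonicity from coefficient behaviour, and estimate the pointwise values at $s = 48/25$ and $s = 16/5$ by truncating the series---but the central claim on which the monotonicity rests is false, and the fallback argument does not repair it.

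The claim that log-convexity of $\Gamma$ forces each coefficient $\Gamma(2k(1-a)+a)/\Gamma(a)$ to be non-increasing in $a\in[\tfrac12,\tfrac34]$ fails already at $k=1$: there the coefficient is $\Gamma(2-a)/\Gamma(a)$, which increases from $\tfrac12$ at $a=\tfrac12$ to $\Gamma(5/4)/\Gamma(3/4)\approx 0.740$ at $a=\tfrac34$. (This is exactly the function $f(q)=\Gamma(2-\tfrac1q)/\Gamma(\tfrac1q)$ of Lemma~\ref{lem3}(a), shown there to be decreasing in $q$, i.e.\ increasing in $a$.) Nor is there a uniform direction: for $k=3$, $\Gamma(6-5a)/\Gamma(a)$ decreases from $15/8$ to $\approx 0.925$, and for $k=2$ the two endpoint values coincide at $3/4$ with a dip in between. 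Because the series alternates in sign, a uniform coefficient monotonicity---even if it held---would push the even-$k$ and odd-$k$ terms in opposite directions, so the sum would not inherit monotonicity term by term. The paper instead pairs consecutive terms $k=2n,\,2n+1$ into blocks $F_n(p,s)$, proves $F_n>0$ for $n\ge 2$ and $s\le 16/5$ via the log-convexity inequality $\Gamma(x+\theta)\le x^\theta\Gamma(x)$, differentiates the full series in $p$ to obtain blocks $G_n(p,s)$, shows $G_n>0$ for $n\ge2$, and then handles the first few blocks ($G_0,G_1$, and if necessary $\tilde G_2,\tilde G_3$) by explicit polynomial estimates in $s^2$. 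That last step is precisely the hard content your outline does not supply.

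The auxiliary idea of integrating $\delta_q'(s) = -\tfrac{\Gamma(1+1/p)}{\Gamma(1-1/p)}\, s\,\gamma_p(s)$ also does not close the gap. Integration gives $\delta_q(s) = 1 - C(q)\int_0^s t\,\gamma_p(t)\,dt$ with $C(q)=\Gamma(1+1/p)/\Gamma(1-1/p)$, so comparing two values of $q$ requires comparing $C(q)\int_0^s t\gamma_p\,dt$ with $C(q')\int_0^s t\gamma_{p'}\,dt$; positivity of $\gamma_p$ alone says nothing about how $\gamma_p$ or $C(q)$ vary with $p$, so this is a fresh monotonicity problem of the same difficulty, not a corollary of Lemma~\ref{lem2}. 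Finally, the numerical bounds $\delta_{4/3}(48/25)>0$ and $\delta_{4/3}(16/5)>-0.588$ cannot rely on the plain Leibniz criterion at $s=16/5$ (where $s^2\approx10.24$), since the raw terms are not decreasing in modulus for the first several $k$; the paper sidesteps this by using $F_n>0$ for $n\ge2$ to obtain $\delta_q(s)\ge\sum_{n=0}^{m}F_n(p,s)$ and then evaluating the finite sums.
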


\begin{proof}
Let $2 \le p = \frac q {q-1} \le 4$ be the conjugate index of $p$ and $s \in [\frac{48}{25},\frac{16} 5]$. We will show that $\frac d {dq} \delta_q(s) > 0$, or equivalently $\frac d {dp} \delta_q(s) < 0$.  For $m > -1$ we have
\begin{equation}\label{eq5.1}
\int_0^\infty r^m \exp(-r^p) dr = \frac 1 p \int_0^\infty u^{\frac{m+1} p -1} \exp(-u) du = \frac 1 {m+1} \Gamma(1+\frac{m+1} p) \ .
\end{equation}
Expanding $\cos(s r)$ into its Taylor series at zero, we find using \eqref{eq5.1}
\begin{equation}\label{eq5.2}
\delta_q(s) = \sum_{n=0}^\infty ( f_{2n}(p) \frac{s^{4n}}{(4n)!} - f_{2n+1}(p) \frac{s^{4n+2}}{(4n+2)!} )=: \sum_{n=0}^\infty F_n(p,s) \ ,
\end{equation}
where $f_{2n}(p) := \frac{\Gamma(1+\frac{4 n -1} p)}{\Gamma(1-\frac 1 p)}$, $f_{2n+1}(p) := \frac{\Gamma(1+\frac{4 n + 1} p)}{\Gamma(1-\frac 1 p)}$. Since $\Gamma$ is logarithmic convex, we have for  $x>0$ and $0 \le \theta \le1$ that
$\Gamma(x+\theta) \le \Gamma(x)^{1-\theta} \Gamma(x+1)^\theta = x^\theta \Gamma(x)$. Let $n \ge 2$, $x := 1 + \frac{4n-1} p \ge 1$ and $\theta := \frac 2 p$. We claim that $x^\theta = (1+ \frac{4n-1} p)^\theta \le \frac{4n+1} p$. This is equivalent to $(4n+p-1)^{\frac 2 p} p^{1-\frac 2 p} \le 4n+1$. Applying the inequality $a b \le \frac{a^r} r + \frac{b^{r'}}{r'}$ with $r := \frac p 2$ and $r' = \frac p {p-2}$, we get
$(4n+p-1)^{\frac 2 p} p^{1-\frac 2 p} \le \frac 2 p (4n+p-1) + p - 2 = \frac 2 p (4n-1) + p$ which is $\le 4n+1$ if and only if $n \ge \frac{p+1} 4$, which is satisfied, since $p \le 4$ and $n \ge 2$. Therefore
\begin{align*}
F_n(p,s) & = f_{2n}(p) \frac{s^{4n}}{(4n)!} ( 1 - \frac{f_{2n+1}(p)} {f_{2n}(p)} \frac{s^2}{(4n+1)(4n+2)} ) \\
& \ge f_{2n}(p) \frac{s^{4n}}{(4n)!} ( 1 - \frac{s^2}{2 p (2n+1)} ) > 0
\end{align*}
for $n \ge 2$ and $s \le \frac{16} 5 < \sqrt{20}$. Hence for all $m \ge 1$, $\delta_q(s) \ge \sum_{n=0}^m F_n(p,s)$. In particular, for $q=\frac 4 3$, we find by numerical evaluation $\delta_{\frac 4 3} (\frac{48}{25}) > 0.0026 > 0$, choosing $m=2$, and $\delta_{\frac 4 3} (\frac{16} 5) > - 0.588$, choosing $m=4$.
Formula \eqref{eq5.2} implies
\begin{align}\label{eq5.3}
\frac d {dp} \delta_q(s) & = \sum_{n=0}^\infty \frac d {dp} F_n(p,s) =:  - \frac 1 {p^2} \sum_{n=0}^\infty G_n(p,s)  \nonumber \\
& =: - \frac 1 {p^2} \sum_{n=0}^\infty (f_{2n}(p)g_{2n}(p) \frac{s^{4n}}{(4n)!} - f_{2n+1}(p)g_{2n+1}(p) \frac{s^{4n+2}}{(4n+2)!}) \ ,
\end{align}
where in terms of the Digamma function $\Psi$,
$g_{2n}(p)=  (4 n -1) \Psi(1+\frac{4n-1} p) + \Psi(1-\frac 1 p)$, $g_{2n+1}(p)=  (4 n + 1) \Psi(1+\frac{4n+1} p) + \Psi(1-\frac 1 p)$. By Abramowitz, Stegun \cite{AS}, 6.3, $\Psi'$ is positive, decreasing and $\Psi'(1+x) \le \frac 1 {x+\frac 1 2}$. Therefore
$$\Psi(1+\frac{4n+1} p) \le \Psi(1+\frac{4n-1} p) + \frac 2 p \Psi'(1+\frac{4n-1} p) \le \Psi(1+\frac{4n-1} p) + \frac 1 {2n} \ , $$
$$(4n+1) \Psi(1+\frac{4n+1} p) \le (4n-1) \Psi(1+\frac{4n-1} p) + 2 \Psi(1+\frac{4n-1} p) + \frac{4n+1}{2n} \ , $$
so that
\begin{equation}\label{eq5.4}\frac{g_{2n+1}(p)}{g_{2n}(p)} \le 1 + \frac{2 \Psi(1+\frac{4n-1} p) + 2 + \frac 1 {2n}}{g_{2n}(p)} = 1 + \frac 2 {4n-1} +
\frac{2 - \frac{2 \Psi(1-\frac 1 p)}{4n-1} + \frac 1 {2n}}{g_{2n}(p)} \ .
\end{equation}
We have $\Gamma(1-\frac 1 p) \in [-1.97,-1.08]$ and $\Psi(1+x) \ge \ln(1+x) - \frac 1 5$ for all $x \ge \frac 7 4$. Using this and \eqref{eq5.4}, calculation yields for $n \ge 2$
\begin{align*}
\frac{g_{2n+1}(p)}{g_{2n}(p)} & \le 1 + \frac 2 {4n-1} + c_n(p) \; , \; c_2(p) = \frac 1 6 + \frac p {10}, \\
c_3(p) & = \frac 1 {11} + \frac p {36}, \ c_4(p) = \frac 1 {18} + \frac p {72}, \ c_n(p) = \frac 2 {3 n \ln(n)} \text{ for } n \ge 5 \ .
\end{align*}
Then for $n \ge 5$, $\frac 2 {4n-1} + c_n(2) \le \frac 1 {n + \frac 1 4}$ and
\begin{align*}
\frac {f_{2n+1}(p) g_{2n+1}(p)} {f_{2n}(p) g_{2n}(p)} & \le \frac {4n+1} p (1+\frac 2 {4n-1} + c_n(p)) \le \frac {4n+1} 2 (1+\frac 2 {4n-1} + c_n(2))  \\
& =: q_n \le \frac{4n+1} 2 (1 + \frac 1 {n+ \frac 1 4}) = \frac{4n+5} 2 \ ,
\end{align*}
whereas $q_2 \le \frac{15} 2$, $q_3 \le \frac{25} 3$ and $q_4 \le \frac{21} 2$. This implies
$$G_n(p,s) \ge f_{2n}(p) g_{2n}(p) \frac{s^{4n}}{(4n)!} (1 - \frac{q_n s^2}{(4n+1)(4n+2)}) =: \tilde{G}_n(p,s) > 0 $$
for all $n \ge 2$ and $s \le \frac{16} 5 < \sqrt{12}$. Hence by \eqref{eq5.3} for all $m \ge 1$
$$\frac d {dp} \delta_q(s) \le - \frac 1 {p^2} \sum_{n=0}^m G_n(p,s) \ . $$
For $m=1$, with $g_0(s) = 0$, we have
$$\frac d {dp} \delta_q(s) \le + \frac {s^2} {2 p^2} [a(p) - b(p) \frac{s^2}{12} + c(p) \frac{s^4}{360} ] =: \phi(p,s) $$
with $-0.972 \le a(p) := f_1(p)g_1(p) \le -0.954$ varying very little, $-0.255 \le b(p) := f_2(p)g_2(p) \le 0.114$, $0 < b(p)$ for $p \le 2.83$ and
$c(p) := f_3(p) g_3(p)$ decreasing in $p \in [2,4]$, with value $6.66$ at $p=2$ and $1.64$ at $p=4$. Therefore $360 |a(p)| \ge 343.5$ and $\phi(p,s) < 0$ will be satisfied if
$$s^2 < 15 \frac{b(p)}{c(p)} + \sqrt{ (15 \frac{b(p)}{c(p)})^2 + \frac{343.5}{c(p)} } \ . $$
This holds for all $0 \le s \le \frac {16} 5$, if $c(p) \le 3.275$, i.e. $p \ge 2.81$. For $0 \le p \le 2$, the right side is minimal for $p=2$ and we require $s \le 2.72$. If $p < 2.81$ and $s > 2.72$ one needs two more terms, $m=3$, to show  $\frac d {dp} \delta_q(s) < 0$,
$$\frac d {dp} \delta_q(s) \le - \frac 1 {p^2} [ G_0(p,s) + G_1(p,s) + \tilde{G}_2(p,s) + \tilde{G}_3(p,s) ] < 0 \ . $$
\end{proof}

\begin{corollary}\label{cor}
For all $\frac 4 3 \le q \le 2$ and $\frac {48}{25} \le s \le \frac {16} 5$, $|\delta_q(s)| \le 0.588$.
\end{corollary}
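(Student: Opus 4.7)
The corollary follows from Lemma \ref{lem4} once the two envelopes are combined with the endpoint estimates. By Lemma \ref{lem4}, for $q \in [\frac 4 3, 2]$ and $s \in [\frac{48}{25}, \frac{16} 5]$ one has
\[
\delta_{4/3}(s) \le \delta_q(s) \le \delta_2(s) = e^{-s^2/4},
\]
so it suffices to bound both $\delta_2(s)$ and $-\delta_{4/3}(s)$ from above by $0.588$.

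The upper half is immediate: since $e^{-s^2/4}$ is decreasing in $s$, one has $\delta_2(s) \le e^{-(48/25)^2/4} = e^{-2304/2500} < 0.4 < 0.588$ throughout the interval.

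For the lower half the plan is to reuse the positive-term Taylor estimate derived in the proof of Lemma \ref{lem4}. Since every term $F_n(4, s)$ with $n \ge 2$ is non-negative on $[0, \frac{16} 5]$, any truncation
\[
P_m(s) := \sum_{n=0}^{m} F_n(4, s)
\]
is a valid lower bound for $\delta_{4/3}(s)$. The choice $m = 4$ was already used in Lemma \ref{lem4} to prove $\delta_{4/3}(\frac{16} 5) > -0.588$, and the task reduces to verifying that the explicit polynomial $P_4$ does not dip below $-0.588$ anywhere else in $[\frac{48}{25}, \frac{16} 5]$.

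The cleanest way to organise this is via monotonicity. From the derivative identity $\delta_{4/3}'(s) = -(\Gamma(\frac 5 4)/\Gamma(\frac 3 4))\, s\, \gamma_4(s)$ and the positivity of $\gamma_4$ on $[0, \pi]$ (Lemma \ref{lem2} together with the Remark following it), the function $\delta_{4/3}$ is strictly decreasing on $[\frac{48}{25}, \pi]$, hence the minimum of $\delta_{4/3}$ over that sub-interval is $\delta_{4/3}(\pi)$. The main obstacle is the short residual interval $[\pi, \frac{16} 5]$ of length less than $0.06$: on it one can either verify that the first positive zero of $\gamma_4$ lies past $\frac{16} 5$ (so that monotonicity extends to the whole interval) or control the variation of $\delta_{4/3}$ via $|\gamma_4(s)| \le 1/(s\,\Gamma(\frac 5 4))$ (a bound noted in the proof of Theorem \ref{th1}(ii)) and close the gap by a direct evaluation of $P_4$ near the right endpoint.
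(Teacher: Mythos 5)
Your proposal follows the same overall route as the paper: the sandwich $\delta_{4/3}(s) \le \delta_q(s) \le \delta_2(s)$, the elementary bound $\delta_2(s)=e^{-s^2/4}<2/5$ for $s\ge \frac{48}{25}$, and a lower bound on $\delta_{4/3}$ via monotonicity coming from $\delta_{4/3}'(s)=-\frac{\Gamma(5/4)}{\Gamma(3/4)}\,s\,\gamma_4(s)$. Where the paper differs is that it does not need to split off a ``residual interval'': it cites Boyd for the fact that the first positive zero of $\gamma_4$ sits near $3.4535>\frac{16}{5}$, so $\gamma_4>0$ on all of $[0,\frac{16}{5}]$ and $\delta_{4/3}$ is monotone decreasing on the whole range, giving $\delta_{4/3}(s)\ge\delta_{4/3}(\frac{16}{5})>-0.588$ immediately from Lemma \ref{lem4}. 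Your first option for the residual interval (``verify the first positive zero of $\gamma_4$ lies past $\frac{16}{5}$'') is thus exactly what the paper does, and is the clean way to finish.

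Your fallback option, however, does not close as sketched. The Lipschitz bound it yields is $|\delta_{4/3}'(s)|\le \frac{\Gamma(5/4)}{\Gamma(3/4)}\, s\cdot\frac{1}{s\,\Gamma(5/4)}=\frac{1}{\Gamma(3/4)}\approx 0.816$, so over the gap of length $\frac{16}{5}-\pi\approx 0.058$ the function $\delta_{4/3}$ can drop by up to about $0.048$. Pairing this with only the endpoint estimate $\delta_{4/3}(\frac{16}{5})>-0.588$ (the content of Lemma \ref{lem4}) gives a lower bound of roughly $-0.636$ on $[\pi,\frac{16}{5}]$, which is weaker than needed. Closing the gap this way would require a sharper numerical lower bound on $P_4$ at an interior point near $\pi$ (say $P_4(\pi)>-0.54$), which is a genuinely new computation not supplied by Lemma \ref{lem4}. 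Also note that the Remark following Lemma \ref{lem2} asserts positivity of $\gamma_p$ only up to $\pi$ and does not prove it; relying on it leaves an unverified claim in the argument, whereas citing Boyd is both stronger and already in the paper's bibliography.
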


\begin{proof}
By Lemma \ref{lem4}, $\delta_{\frac 4 3}(s) \le \delta_q(s) \le \delta_2(s) = \exp(-\frac{s^2} 4) \le \exp(-(\frac{24}{25})^2) < \frac 2 5$ for all $s \in [\frac{48}{25},\frac{16}5]$. By \eqref{eq2.7}, $\delta'_{\frac 4 3}(s) = - \frac{\Gamma(\frac 5 4)}{\Gamma(\frac 3 4)} s \gamma_4(s)$. According to Boyd \cite{Bo}, $\gamma_4(s) > 0$ for all $0 \le s \le 3.45$, the first positive zero of $\gamma_4$ being at $s_1 \simeq 3.4535$. Therefore $\delta_{\frac 4 3}$ is strictly decreasing in $[\frac{48}{25},\frac{16}5]$, with $\frac 2 5 > \delta_{\frac 4 3}(s) \ge \delta_{\frac 4 3}(\frac{16} 5) > - 0.588$ by Lemma \ref{lem4}. We conclude that
$$\max \{ |\delta_q(s)| \ | \ q \in [\frac 4 3, 2] \ , \ s \in [\frac{48}{25},\frac{16}5] \} \le 0.588 \ . $$
\end{proof}

{\bf Remark}. In fact, $\delta_q(s)$ is increasing in $q \in (1,2]$ and $s \in [0,\frac{24} 5]$. For $q \searrow 1$, $\delta_q(s) \to \cos(s)$, so that $|\delta_q(\pi)| \to 1$. \\

\section{Proof of Theorem \ref{th2}}

{\bf Proof of Theorem \ref{th1}. } \\
Barthe and Naor \cite{BN} showed for $1 \le q < \infty$ that
$$\lim_{n \to \infty} \frac{P_{n,q}(a^{(n)})}{P_{n,q}(a^{(2)})} = \sqrt{\frac{2^{\frac 2 q}} \pi \Gamma(\frac 1 q) \Gamma(2-\frac 1 q)} \ , $$
and this is $< 1$ if and only if $\frac 4 3 < q < 2$, cf. Lemma \ref{lem3} (b). Now consider $\frac 4 3 < q < 2$ and let $p = \frac q {q-1}$ be the conjugate index, $2 < p < 4$. As in the proof of Theorem \ref{th1}, we use $\cos(x) \ge 1 - \frac{x^2} 2 + \frac{x^4}{26} \>0$ for all $0 \le x \le \frac 3 2$, so that by \eqref{eq2.6} for all $s \le \frac 3 2 \sqrt n$
\begin{align*}
\delta_q(\frac s {\sqrt n}) & \ge \frac p {\Gamma(\frac 1 q)} \Big[ \int_0^{\frac 3 2 \frac{\sqrt n} s} (1 - \frac {s^2 r^2} {2n} + \frac {s^4 r^4} {26 n^2}) \ r^{p-2} \exp(-r^p) dr \\
& \quad \quad + \int_{\frac 3 2 \frac{\sqrt n} s}^\infty \cos(\frac{sr}{\sqrt n}) \ r^{p-2} \exp(-r^p) dr \Big] \\
& \ge \frac p {\Gamma(\frac 1 q)} \Big[ \int_0^\infty (1 - \frac {s^2 r^2} {2n} + \frac {s^4 r^4} {26 n^2}) \ r^{p-2} \exp(-r^p) dr - R \Big] \\
& = \frac 1 {\Gamma(1 - \frac 1 p)} \Big[ \Gamma(1 - \frac 1 p) - \Gamma(1 + \frac 1 p) \frac{s^2}{2 n} + \Gamma(1 + \frac 3 p) \frac{s^4}{26 n^2} - R \Big] \ ,
\end{align*}
\begin{align*}
R  & := \int_{\frac 3 2 \frac{\sqrt n} s}^\infty (2-\frac{s^2}{2 n} r^2 + \frac{s^4}{26 n^2} r^4) \ r^{p-2} \exp(-r^p) dr  \\
& = \frac 1 p \int_{(\frac 3 2 \frac{\sqrt n} s)^p}^\infty (2 u^{-\frac 1 p} - \frac{s^2}{2 n} u^{\frac 1 p} + \frac{s^4}{26 n^2} u^{\frac 3 p}) \exp(-u) du \ . \end{align*}
Then $u^{-\frac 1 p} \le \frac 2 3 \frac s {\sqrt n}$ and $u^{\frac 3 p} \le u^{\frac 3 2}$. As in the proof of Theorem 1 (iii), for $x \ge \frac 9 2$
$$\int_x^\infty u^{\frac 3 2} \exp(-u) du \le \Big( (1+x) (2+2 x +x^2) \Big)^{\frac 1 2} \exp(-x) \le \frac{13}{20} x^2 \exp(-x) \ . $$
Choose again $s \le \sqrt{\frac n 2}$. Then with $x := (\frac 3 2 \frac {\sqrt n} s)^p \ge (\frac 3 {\sqrt 2})^2 = \frac 9 2$ and $y := \frac s {\sqrt n}$
$$R \le \frac 1 p ( 2 (\frac 2 3 y) + \frac{y^4}{26} \frac{13}{20} (\frac 3 2 \frac 1 y)^{2p}) \exp(-(\frac 3 2 \frac 1 y)^p) \ . $$
We want $R \le \Gamma(1+\frac 3 p) \frac{y^4}{26}$, a condition which is strongest for $p=2$ when it means
$$\frac 1 2 ( \frac 4 3 y + \frac{81}{640}) < \frac{\Gamma(\frac 5 2)} {26} y^4 \exp(\frac 9 4 \frac 1 {y^2}) \ , $$
which is the same requirement as in (iii) of the proof of Theorem \ref{th1}, being valid for $0 \le y \le 0.7161$. Thus the choice of $s \le \sqrt{\frac n 2}$ is allowed and then
$$\delta_q(\frac s {\sqrt n}) \ge 1 - c \frac{s^2} n \quad , \quad c := \frac 1 2 \frac{\Gamma(1+\frac 1 p)}{\Gamma(1-\frac 1 p)} = \frac 1 2 \frac{\Gamma(2-\frac 1 q)}{\Gamma(\frac 1 q)} \ . $$
We have by Lemma \ref{lem3} (a) $\frac 1 4 \le c \le 0.3699 < \frac{37}{100}$, the lower estimate being attained for $p=q=2$, the upper valid for $p=4$, $q= \frac 4 3$. Therefore for $x := c \frac{s^2} n \le \frac{37}{200}$, $\ln(1-x) \ge -x - \frac 1 2 \frac{x^2}{1-x} \ge -x - \frac{100}{163} x^2$ and
$(1-c \frac{s^2} n)^n \ge \exp(-c s^2) (1 - \frac{100}{163} c^2 \frac{s^4} n)$. This yields the estimate
\begin{align*}
& \int_0^{\sqrt{\frac n 2}} \frac{1-\delta_q(\frac s {\sqrt n})^n}{s^2} ds \le \int_0^{\sqrt{\frac n 2}} \frac{1-\exp(-c s^2)(1-\frac{100}{163} c^2 \frac{s^4} n)}{s^2} ds \\
& \le \int_0^\infty \frac{1-\exp(-c s^2)}{s^2} ds - \int_{\sqrt{\frac n 2}}^\infty \frac{1-\exp(-c s^2)}{s^2} ds + \frac{100}{163} \frac{c^2} n \int_0^\infty s^2 \exp(-c s^2) ds \\
& = \sqrt{\pi c} (1+ \frac{25}{163} \frac 1 n) - \int_{\sqrt{\frac n 2}}^\infty \frac{1-\exp(-c s^2)}{s^2} ds \ .
\end{align*}
Therefore, using \eqref{eq2.6},
\begin{align*}
P_{n,q}(a^{(n)}) & = \Gamma(\frac 1 q) \frac 2 \pi \int_0^\infty \frac{1-\delta_q(\frac s {\sqrt n})^n}{s^2} ds \\
& = \Gamma(\frac 1 q) \frac 2 \pi \Big( \int_0^{\sqrt{\frac n 2}} \frac{1-\delta_q(\frac s {\sqrt n})^n}{s^2} ds + \int_{\sqrt{\frac n 2}}^\infty \frac{1-\delta_q(\frac s {\sqrt n})^n}{s^2} ds \Big) \\
& \le \Gamma(\frac 1 q) \frac 2 \pi \Big(\sqrt{\pi c} (1+ \frac{25}{163} \frac 1 n) + \int_{\sqrt{\frac n 2}}^\infty  \frac{\exp(-c s^2) - \delta_q(\frac s {\sqrt n})^n}{s^2} ds \Big) \\
& = \sqrt{\frac 2 \pi \Gamma(\frac 1 q) \Gamma(2-\frac 1 q)} (1+\frac{25}{163} \frac 1 n) + \Gamma(\frac 1 q) \frac 2 \pi S \ ,
\end{align*}
$S := \frac 1 {\sqrt n} \int_{\frac 1 {\sqrt 2}}^\infty  \frac{\exp(-c u^2 n) - \delta_q(u)^n}{u^2} du$. Since $c \ge \frac 1 4$ and $\delta_q(s) >0$ for all $0 \le u \le \frac {48}{25}$ by Lemma \ref{lem4}, we find
$$S \le   \frac 1 {\sqrt n} \int_{\frac 1 {\sqrt 2}}^\infty  \frac{\exp(- \frac{u^2} 4 n)}{u^2} du + \frac 1 {\sqrt n} \int_{\frac {48}{25}}^\infty \frac{|\delta_q(u)|^n}{u^2} du \ .$$
For $n \ge 8$, $v = \frac{u^2} 4 n \ge 1$ and
\begin{align*}
\frac 1 {\sqrt n} \int_{\frac 1 {\sqrt 2}}^\infty  \frac{\exp(- \frac{u^2} 4 n)}{u^2} du& = \frac 1 4 \int_{\frac n 8}^\infty \frac{\exp(-v)}{v^{\frac 3 2}} dv \\
& \le \frac 1 4 (\frac 8 n)^{\frac 3 2} \int_{\frac n 8}^\infty \exp(-v) dv = \frac{4 \sqrt 2}{n^{\frac 3 2}} \exp(-\frac n 8) \le \frac{4 \sqrt 2}{n^{\frac 3 2}} 0.8825^n \ .
\end{align*}
By Corollary \ref{cor} we have $|\delta_q(s)| \le 0.588$ for all $\frac {48}{25} \le u \le \frac {16} 5$. Therefore
$$\frac 1 {\sqrt n} \int_{\frac {48}{25}}^{\frac {16} 5} \frac{|\delta_q(u)|^n}{u^2} du \le \frac 1 {\sqrt n} 0.588^n \int_{\frac {48}{25}}^{\frac {16} 5} \frac {du}{u^2} = \frac 5 {24} \frac 1 {\sqrt n} 0.588^n \ . $$
Integration by parts shows for $\frac 4 3 \le q < 2$, $2 < p \le 4$ that
\begin{align*}
|\delta_q(u)| & = |\frac p {\Gamma(\frac 1 q)} \int_0^\infty \frac{\sin(ur)} u (p-2-p r^p) \ r^{p-3} \exp(-r^p) dr | \\
& \le \frac 1 u  \frac p {\Gamma(\frac 1 q)} \int_0^\infty |p-2-p r^p| \ r^{p-3} \exp(-r^p) dr \\
& = \frac 1 u \frac{2 p} {\Gamma(\frac 1 q)}(\frac{1-\frac 2 p} e)^{1-\frac 2 p} \le \frac{14} 5 \frac 1 u \ ,
\end{align*}
where the last inequality is the sharpest for $p=4$, $q= \frac 4 3$. Hence
$$\frac 1 {\sqrt n} \int_{\frac {16} 5}^\infty \frac{|\delta_q(u)|^n}{u^2} du \le \frac 1 {\sqrt n} (\frac{14} 5)^n \int_{\frac {16} 5}^\infty \frac{du}{u^{n+2}}
= \frac 5 {16} \frac 1 {\sqrt n (n+1)} (\frac 7 8)^n \le \frac 5 {16} \frac 1 {n^{\frac 3 2}} 0.875^n \ . $$
We finally get that
$$S \le \frac{4 \sqrt 2}{n^{\frac 3 2}} 0.8825^n + \frac 5 {24} \frac 1 {\sqrt n} 0.588^n + \frac 5 {16} \frac 1 {n^{\frac 3 2}} 0.875^n \ . $$
By Lemma \ref{lem3} (a) $\sqrt{\frac 2 \pi \frac{\Gamma(\frac 1 q)}{\Gamma(2 - \frac 1 q)}} \le \frac 2 {\sqrt \pi}$, with equality for $p=q=2$, and
$\frac 2 {\sqrt \pi} S \le \frac{0.10559} n$ for all $n > 20$. We conclude with $\frac {25}{163} + 0.10559 < 0.25896$ that
$$P_{n,q}(a^{(n)}) \le \sqrt{\frac 2 \pi \Gamma(\frac 1 q) \Gamma(2-\frac 1 q)} ( 1+ \frac{0.25896} n ) \ . $$
By Barthe, Naor \cite{BN} $P_{n,q}(a^{(2)}) = 2^{\frac 1 2 -\frac 1 q}$, so that together with Lemma \ref{lem4} (b)
\begin{align*}
\frac{P_{n,q}(a^{(n)})}{P_{n,q}(a^{(2)})} & \le \sqrt{\frac {2^{\frac 2 q}} \pi \Gamma(\frac 1 q) \Gamma(2-\frac 1 q)} ( 1+ \frac{0.25896} n ) \\
& \le \Big(1- M (\frac 1 q - \frac 1 2)(\frac 3 4 - \frac 1 q) \Big) \Big(1+ \frac{0.25896} n \Big) \quad , \quad M= 0.86326.
\end{align*}
Suppose that $n$ satisfies  $n \ge \frac 4 5 \frac{q^2}{(q-\frac 4 3)(2-q)}$. Then the last product is $< 1$, since $\frac{0.25896} n \le M (\frac 1 q - \frac 1 2)(\frac 3 4 - \frac 1 q)$ suffices and this requires $n > \frac{\frac 8 3 \frac{0.25896} M q^2}{(q-\frac 4 3)(2-q)}$, with $\frac 8 3 \frac{0.25896} M < \frac 4 5$. The condition  $n \ge \frac 4 5 \frac{q^2}{(q-\frac 4 3)(2-q)}$ will be satisfied if
$n > \frac{\frac{32}{15}}{q- \frac 4 3} + \frac{\frac{24} 5}{2-q} = \frac{\frac 8 3 q - \frac{32}{15}}{(q-\frac 4 3)(2-q)} \ge \frac{\frac 4 5 q^2}{(q-\frac 4 3)(2-q)}$, where the last inequality is an equality for $q=\frac 4 3$ and $q=2$. Hence for $n > \frac{\frac{32}{15}}{q- \frac 4 3} + \frac{\frac{24} 5}{2-q} =: \phi(q)$ we have $P_{n,q}(a^{(n)}) < P_{n,q}(a^{(2)})$. The restriction $n>20$ is automatically satisfied since the minimum of $\phi$ is $\phi(\frac 8 5) = 20$.
\hfill $\Box$  \\

As mentioned, $\lim_{n \to \infty} \frac{P_{n,q}(a^{(n)})}{P_{n,q}(a^{(2)})} = \sqrt{\frac {2^{\frac 2 q}} \pi \Gamma(\frac 1 q) \Gamma(2-\frac 1 q)}$. \\

\vspace{1cm}

\vspace*{1cm}

\noindent Mathematisches Seminar \\
Universit\"at Kiel \\
24098 Kiel, Germany \\
hkoenig@math.uni-kiel.de \\

\end{document}